\documentclass[12pt,a4paper,oneside]{article}
\usepackage[utf8]{inputenc}
\usepackage[T1]{fontenc}
\usepackage{amsmath}
\usepackage[english]{babel}
\usepackage{amsthm,amssymb}
\usepackage{dsfont}
\usepackage{csquotes}
\usepackage{graphicx}
\usepackage{color, pgf}
\usepackage{hyperref}
\usepackage{url}
\usepackage{tikz}
\usepackage{booktabs}
\usepackage{multirow}
\usetikzlibrary{calc,backgrounds,fit}
\usetikzlibrary{external}
\usetikzlibrary{patterns}
\usetikzlibrary{decorations.pathreplacing,decorations.markings,arrows}
\usetikzlibrary{shapes,intersections,positioning}
\usetikzlibrary{arrows}
\hypersetup{colorlinks=true, urlcolor=blue, citecolor=red} 
\newtheorem{thm}{Theorem}[section]
\newtheorem{defn}[thm]{Definition}
\newtheorem{lem}[thm]{Lemma}
\newtheorem{cor}[thm]{Corollary}

\newtheorem{prop}[thm]{Proposition}

\newcommand\centre[4][below]{%
  \node (#3) at #2 [circle,minimum size=0.5em,inner sep=0pt,fill] {};%
  \node [#1=0.15em] at (#3) {#4};}

\newcommand{\edgesint}{\mathcal{E}_{\operatorname{int}}}
\newcommand{\reg}{\operatorname{reg}(\mathcal{T})}
\newcommand{\Tau}{\mathcal{T}}
\newcommand{\dkl}{d_{K|L}}
\newcommand{\ds}{d_{\sigma}}
\newcommand{\ms}{m_{\sigma}}

\newcommand{\E}{\mathbb{E}}
\newcommand{\N}{\mathbb{N}}
\newcommand{\R}{\mathbb{R}}

\newcommand{\eps}{\varepsilon}
\newcommand{\pe}{\psi_{\varepsilon}}
\newcommand{\ups}{u_{\varepsilon}}
\newcommand{\edges}{\mathcal{E}}
\newcommand{\ekint}{\mathcal{E}_{int}^K}
\newcommand{\eint}{\mathcal{E}_{int}}
\newcommand{\ek}{\mathcal{E}^K}
\newcommand{\unk}{u_K^n}

\newcommand{\un}{u^n}
\newcommand{\unm}{u^{n-1}}
\newcommand{\tun}{\tilde{u}^n}
\newcommand{\tunm}{\tilde{u}^{n-1}}

\newcommand{\dt}{\Delta t}
\newcommand{\tn}{t_n}

\newcommand{\erww}[1]{\mathbb{E}\left[#1\right]}
\newcommand{\Stau}{S_{\tau}}
\newcommand{\Ttau}{T_{\tau}}

\newcommand{\tA}{\tilde{A}}
\newcommand{\Rtd}{\mathbb{R}^{\tilde{d}}}

\title{Study of a TPFA scheme for the stochastic Allen-Cahn problem with constraint through numerical experiments}
\author{Niklas Sapountzoglou\footnotemark[1] \and Aleksandra Zimmermann\footnotemark[1]}
\date{\today}

\begin{document}

\maketitle

\begin{abstract} 
This contribution provides numerical experiments for a finite volume scheme for an approximation of the stochastic Allen-Cahn equation with homogeneous Neumann boundary conditions. The approximation is done by a Yosida approximation of the subdifferential operator. The problem is set on a polygonal bounded domain in two or three dimensions. The non-linear character of the projection term induces challenges to implement the scheme. To this end, we provide a splitting method for the finite volume scheme. We show that the splitting method is accurate. The computational error estimates induce that the squared $L^2$-error w.r.t. time is of order $1$ as long as the noise term is small enough. For larger noise terms the order of convergence w.r.t. time might become worse.\\

\noindent\textbf{Keywords:} Stochastic non-linear parabolic equation with constraint $\bullet$ Multiplicative Lipschitz noise $\bullet$ Finite volume scheme $\bullet$ Variational approach $\bullet$ Splitting method $\bullet$ Multivoque maximal monotone operator $\bullet$ Differential inclusion $\bullet$ Lagrange multiplier $\bullet$ Numerical experiments.\\

\noindent
\textbf{Mathematics Subject Classification (2020):} 60H15 $\bullet$ 35K55 $\bullet$ 65M08.
\end{abstract}

\footnotetext[1]{TU Clausthal, Institut f\"ur Mathematik, Clausthal-Zellerfeld, Germany, niklas.sapountzoglou@tu-clausthal.de, aleksandra.zimmermann@tu-clausthal.de}

\section{Introduction}\label{Section Introduction}
\subsection{The stochastic Allen-Cahn problem with constraint}
Let $T>0$ and $\Lambda$ be a bounded, open, connected, and polygonal set of $\R^d$, $d \in \{2,3\}$, and $(\Omega,\mathcal{A},\mathds{P})$ a probability space endowed with a right-continuous, complete filtration $(\mathcal{F}_t)_{t\geq 0}$. We are interested in the study of a TPFA scheme for the time noise-driven Allen-Cahn equation with Neumann boundary conditions, i.e.,
\begin{align}\label{equation}
\begin{aligned}
du+(\psi-\Delta u)\,dt &=g(u)\,dW(t)
&&\text{ in }\Omega\times(0,T)\times\Lambda;\\
u(0,\cdot)&=u_0, &&\text{ in } \Omega\times\Lambda;\\
\nabla u\cdot \mathbf{n}&=0, &&\text{ on }\Omega\times(0,T)\times\partial\Lambda;
\end{aligned}
\end{align}
where $(W(t))_{t\geq 0}$ is a standard, one-dimensional Brownian motion with respect to $(\mathcal{F}_t)_{t\geq 0}$ on $(\Omega,\mathcal{A},\mathds{P})$, $\mathbf{n}$ denotes the unit normal vector to $\partial\Lambda$ outward to $\Lambda$ and $\psi\in \partial I_{[0,1]}(u)$. In this case, we define
\begin{align*}
I_{[0,1]}(r)=\begin{cases} 0 & \text{if} \ r\in [0,1]\\
+\infty & \text{else}
\end{cases}
\end{align*}
and the set-valued mapping $\partial I_{[0,1]}:[0,1]\rightarrow\mathcal{P}(\R)$ is defined by 
\begin{align}\label{251204_01}
\partial I_{[0,1]}(r)=\begin{cases} \{0\} & \text{if} \ r\in (0,1)\\
(-\infty,0] & \text{if} \ r=0\\
[0,\infty)& \text{if} \ r=1.
\end{cases}
\end{align}
We consider the following assumptions on the data:
\begin{itemize}
\item The initial datum $u_0\in L^2(\Omega;L^2(\Lambda))$ is $\mathcal{F}_0$-measurable and verifies $0\leq u_0(\omega,x) \leq 1$, for almost all $(\omega,x)\in \Omega\times \Lambda$.
\item The diffusion coefficient $g: \R\rightarrow\R$ is a $L_g$-Lipschitz-continuous function (with $L_g\geq 0$), such that \\ $\operatorname{supp} g\subset [0,1]$.
\end{itemize}
From the results of \cite{BBBLV17} it is well known, that there exists a unique solution to \eqref{equation} in the following sense:
\begin{defn}\label{solution} 
Any pair of predictable stochastic processes $(u,\psi)$ with $u,\psi\in L^2\left((\Omega\times(0,T);L^2(\Lambda)\right)$ such that 
\begin{align*}
 u\in L^2(\Omega;\mathcal{C}([0,T];L^2(\Lambda)))\cap L^2\left(\Omega\times(0,T);H^1(\Lambda)\right),
\end{align*}
is a solution to Problem \eqref{equation} if $0\leq u \leq 1$ and $\psi\in \partial I_{[0,1]}(u)$ almost everywhere in $\Omega\times (0,T)\times\Lambda$  and if $(u,\psi)$ satisfies the equation
\begin{align}\label{eq 161225_01}
u(t) -u_0+\int_0^t \left(\psi(s)-\Delta u(s)\right)\,ds+\int_0^t g(u(s))\,dW(s),
\end{align}
for all $t\in [0,T]$ in $L^2(\Lambda)$, $\mathds{P}$-a.s in $\Omega$, where $\Delta$ denotes the Laplace operator on $H^1(\Lambda)$ associated with the weak Neumann boundary condition.
\end{defn}
\subsection{State of the art}
The study of numerical schemes for stochastic partial differential equations (SPDEs) has been a
very active subject in recent decades and for this reason, an extensive literature on this topic is available. We refer the interested reader to \cite{ACQS20,DP09,OPW20} for a general overview and to the references therein. Concerning the numerical analysis of variational solutions, in the past the use of finite-element methods has been favored and extensively employed (see, e.g., \cite{BBNP14, BHL21} for a detailed exposition of existing papers). In recent years, there has been increasing interest in numerical approximations which preserve the specific structure of the underlying equations. In this way, important physical properties of the underlying model are implemented at the numerical level, and the robustness and the stability of numerical methods are improved. As pointed out in \cite{D14}, among the numerous families of numerical methods, e.g., finite difference, finite element, discontinuous Galerkin, Gradient Discretization Method (GDM), finite-volume methods (FVM) are very well-suited for applications in which the conservation of quantities is important. For finite-volume methods, balance and local conservativity are the leading principles in the construction of numerical fluxes. The Two Point Flux Approximation (TPFA) scheme is a cell-centered finite-volume method which is particularly cheap to implement, since its matrices are very sparse. In the last decade, there has been growing interest in the use of finite-volume methods for the spatial discretization of SPDEs. First, monotone schemes have been derived for the approximation of hyperbolic problems perturbed by multiplicative noise, known as stochastic first order scalar conservation laws in the literature, see, e.g., \cite{BCG162, BCG161, BCC20, BCG17}. More recently, the convergence analysis of a numerical scheme of Euler-Maruyama type with respect to time and of TPFA-type with respect to space for a stochastic heat equation with multiplicative Lipschitz noise has been studied in \cite{BNSZ22}. In this contribution, convergence of approximate solutions was proved by adapting the stochastic compactness method based on Skorokhod’s representation theorem. In \cite{BNSZ23,BSZ25}, these results have been extended to more general stochastic PDEs of diffusion-convection type where the convective term is given by a first-order divergence type operator. At the same time, the authors avoided the use of the stochastic compactness argument. In \cite{SZ25}, explicit error bounds for the numerical scheme in \cite{BNSZ22} have been developed and complemented with computational experiments. Finally, in \cite{BSVZ25}, the methods developed in \cite{BNSZ22} and \cite{BSZ25} have been applied to a stochastic Allen–Cahn problem with constraint. 
The idea of stochastic Allen-Cahn equations with constraint as considered in \cite{BBBLV17} is to describe the evolution of the damage over time at a solid interface of a composite material by the stochastic differential inclusion 
\begin{align}\label{ACst}
{\mathrm d} u -\Delta u\,{\mathrm d}t + \partial I_{[0,1]}(u)\,{\mathrm d}t &\ni (u)\, {\mathrm d}W + (\beta(u)+f) \,{\mathrm d}t 
\end{align}
 on a bounded domain of $\mathbb{R}^d$, and subject to homogeneous Neumann boundary conditions, where $\partial I_{[0,1]}$ is defined according to \eqref{251204_01}. More precisely, our solution $u$ represents the local proportion of intact bonds within the considered material: It is then a quantity between $0$ and $1$. The case $u=0$ is corresponding to a totally damaged material, the case $u=1$ to a material without damage. The physical constraint on the solution $u$ is ensured by the presence of a multivalued maximal monotone operator. The addition of a stochastic perturbation to this Allen-Cahn model is motivated by the consideration of changes at the microscopic scale of the material structure such as the formation of cavities during damage, the random distribution of micro-cracks and external forces.  In \cite{BSVZ25} a finite-volume scheme for the stochastic Allen-Cahn problem with constraint \eqref{ACst} is proposed in the case of a polygonal domain and for the space dimensions $d=2$ or $d=3$.
 Moreover, the convergence of the scheme towards the variational solution is studied. The main feature of the proposed scheme is the approximation of the set-valued constraint by its Yosida approximation. In this way, an additional parameter enters the discrete equation and the passage to the limit is performed simultaneously with the time, space and the regularization parameter. The authors show the convergence of the proposed Euler-Maruyama-TPFA scheme under a coupling condition of the time parameter and the parameter related to the Yosida approximation of the set-valued constraint. 
\subsection{Discretization framework and notation}\label{sectiontwo}
In the following, we introduce some general notation and recall important definitions related to temporal and spatial discretizations. Let us mention that they are the same as in previous papers \cite{BNSZ22,BNSZ23,BSZ25,SZ25}, but we repeat them for the convenience of the reader.

\subsubsection{General notation}

\begin{itemize}
\item[$\bullet$] The integral over $\Omega$ with respect to the probability measure $\mathds{P}$ is denoted by $\E[\cdot]$, and is called the expectation.
\item[$\bullet$] For any $x,y$ in $\R^d$, the euclidean norm of $x$ is denoted by $|x|$, and the associated scalar product of $x$ and $y$ by $x\cdot y$.
\item[$\bullet$] The $d$-dimensional Lebesgue measure of $\Lambda$ is denoted by $|\Lambda|$, by overusing the euclidean norm notation.
\end{itemize}

\subsubsection{Uniform time step and admissible finite-volume meshes}\label{mesh}
According to the TPFA scheme proposed in \cite{BSVZ25}, we introduce the following temporal and spatial discretizations:
The temporal one is achieved using a uniform subdivision: setting $N\in \mathbb{N}$, the fixed time step is defined by $\dt=\frac{T}{N}$ and the interval $[0,T]$ is decomposed in $0=t_0<t_1<...<t_N=T$ equidistantly with $\tn=n \dt$ for all $n\in \{0, ..., N-1\}$. For the spatial one, following \cite[Definition 9.1]{EymardGallouetHerbinBook}, we consider admissible finite-volume meshes as defined hereafter: 
\begin{defn} (Admissible finite-volume mesh)\label{defmesh} 
An admissible finite-volume mesh of $\Lambda$, denoted by $\Tau$, is given by a family of \enquote{control volumes}, which are open polygonal convex subsets of $\Lambda$, a family of subsets of $\overline{\Lambda}$ contained in hyperplanes of $\R^d$, denoted by $\edges$ (these are the edges for $d=2$ or sides for $d=3$ of the control volumes), with strictly positive $(d-1)$-dimensional Lebesgue measure, and a family of points of $\Lambda$ denoted by $\mathcal{P}$ satisfying the following properties\footnotemark[1]\footnotetext[1]{In fact, we shall denote, somewhat incorrectly, by $\mathcal{T}$ the family of control volumes.}
\begin{itemize}
\item $\overline{\Lambda}=\bigcup_{K\in\Tau}\overline{K}$.
\item For any $K\in\mathcal{T}$, there exists a subset $\ek$ of $\edges$ such that $\partial K=\overline{K}\setminus K=\bigcup_{\sigma\in \ek}\overline{\sigma}$ and $\edges=\bigcup_{K\in\mathcal{T}}\ek$. $\ek$ is called the set of edges of $K$ for $d=2$ and sides for $d=3$, respectively.
\item For any $K,L\in\Tau$, with $K\neq L$ then either the $(d-1)$ Lebesgue measure of $\overline{K}\cap \overline{L}$ is $0$ or $\overline{K}\cap \overline{L}=\overline{\sigma}$ for some $\sigma\in\edges$, which will then be denoted by $K|L$ or $L|K$.
\item The family $\mathcal{P}=(x_K)_{K\in\mathcal{T}}$ is such that $x_K\in \overline{K}$ for all $K\in\mathcal{T}$ and, if $K,L\in\mathcal{T}$ are two neighboring control volumes, it is assumed that $x_K\neq x_L$, and that the straight line between $x_K$ and $x_L$ is orthogonal to $\sigma=K|L$.
\item For any $\sigma\in\edges$ such that $\sigma\subset\partial\Lambda$, let $K$ be the control volume such that $\sigma \in\ek$. If $x_K\notin\sigma$, the straight line going through $x_K$ and orthogonal to $\sigma$ has a nonempty intersection with $\sigma$.
\end{itemize}
\end{defn}

\begin{figure}[htbp!]
\centering
\begin{tikzpicture}[scale=3]

  \clip (-1.4,-0.8) rectangle (2.0,1.7);

    \foreach \P/\x/\y in {
        A/-1/0.6,
        B/0/1.2,
        C/0/-0.2,
        D/1.5/0.75}
        \node[circle,fill,inner sep=1.2pt] (\P) at (\x,\y) {};

  \centre[above right]{(-0.6,0.5)}{xK}{$x_K$};
  \centre[above left]{(0.9,0.5)}{xL}{$x_L$};

  \draw[thin] (A)--(B)--(C)--(A);
  \draw[thin] (D)--(B)--(C)--(D);

  \draw[very thick] (B)--(C);
  \node[below right=4pt]
    at ($(B)!0.5!(C)$) {$\sigma = K|L$};

  \node at (-0.7,0.15) {\large $K$};
  \node at (0.85,0.05) {\large $L$};

  \draw[dashed] (xK)--(xL);
    \coordinate (KK) at ($(xK)!0.52!90:(xL)$);
    \coordinate (LL) at ($(xL)!0.52!-90:(xK)$);
    
    \draw[dotted] (xK)--(KK);
    \draw[dotted] (xL)--(LL);
    
    \draw[|<->|]
      (KK)--(LL)
      node[midway,above=3pt,fill=white] {$d_{\sigma}$};
  \coordinate (nkl) at ($(B)!0.4!(C)$);
  \draw[->,>=latex,thick]
    (nkl)--($(nkl)!0.35cm!90:(C)$)
    node[above] {$\mathbf n_{K,\sigma}$};

  \draw[dashed]
    (xK)--($(xK)!2!( $(A)!(xK)!(B)$ )$);
  \draw[dashed]
    (xK)--($(xK)!2!( $(A)!(xK)!(C)$ )$);

  \draw[dashed]
    (xL)--($(xL)!2!( $(D)!(xL)!(B)$ )$);
  \draw[dashed]
    (xL)--($(xL)!2!( $(D)!(xL)!(C)$ )$);

\end{tikzpicture}
\caption{Two neighboring control volumes $K,L \in \Tau$ with interface $\sigma = K|L$ in $\R^2$}
\end{figure}
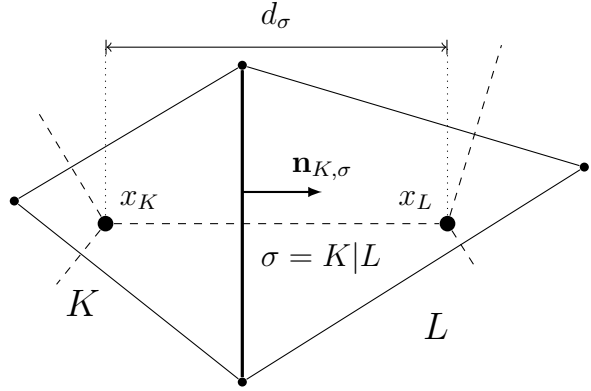

For a given admissible finite-volume mesh $\Tau$ of $\Lambda$, the following associated notations will be used in the rest of the paper.\\

\textbf{Notations.}
\begin{itemize}

\item The mesh size is denoted by $h=\operatorname{size}(\Tau)=\sup\{\operatorname{diam}(K): K\in\Tau\}$.

\item The number of control volumes $K\in\Tau$ is denoted by $d_h\in\mathbb{N}$, where $h=\operatorname{size}(\Tau)$.

\item The sets of interior and exterior interfaces are respectively denoted by\\
$\mathcal{E}_{\operatorname{int}}:=\{\sigma\in\edges:\sigma\nsubseteq \partial\Lambda\}$ and $\mathcal{E}_{\operatorname{ext}}:=\{\sigma\in\edges:\sigma\subseteq \partial\Lambda\}$. Moreover we set $\ekint := \ek \cap \eint$ for any $K \in \Tau$.

\item For any $K\in\Tau$, the $d$-dimensional Lebesgue measure of $K$ is denoted by $m_K$.

\item For any $K\in\Tau$, the unit normal vector to $\partial K$ outward to $K$ is denoted by $\mathbf{n}_K$.

\item For any $K\in\Tau$ and any $\sigma \in \ek$, the unit vector on $\sigma$ pointing out of $K$ is denoted by $\mathbf{n}_{K,\sigma}$.

\item For any $\sigma\in\edgesint$, the $(d-1)$-dimensional Lebesgue measure of $\sigma$ is denoted by $m_\sigma$.

\item For any neighboring control volumes $K,L\in\Tau$, the euclidean distance between $x_K$ and $x_L$ is denoted by $d_{K|L}$.

\item The maximum of edges incident to any vertex of the mesh is denoted by $\mathcal N$.

\item For any $K\in \Tau$ and any $\sigma\in \ek$, the Euclidean distance between $x_K$ and $\sigma$ is denoted by $d(x_K,\sigma)$ and we assume that there exists $\xi >0$ such that $\xi h \leq d(x_K, \sigma)$ for all $K \in \Tau$.
\end{itemize}

\subsection{Aim of this study}
From the point of view of applications, some questions remain open in \cite{BSVZ25}. The multivalued term in \eqref{equation} ensures that, for initial data with values in $[0,1]$, the values of the solution $u$ stay in $[0,1]$. For the sake of structure preservation, it would be desirable that the numerical approximations given by the scheme proposed in \cite{BSVZ25} also take values in $[0,1]$. Secondly, to provide enough regularity on the approximate solutions and to obtain the necessary a-priori estimates, our scheme must be time implicit in the Yosida approximation. Consequently, an approximation procedure is needed to solve the resulting nonlinear equations numerically. Since the Yosida approximations are a sequence of piecewise linear functions with only Lipschitz regularity which is depending on the discretization parameter, this choice is delicate.
 In this contribution, we address these questions and provide numerical simulations for the scheme developed in \cite{BSVZ25} in the case $\beta \equiv 0$ and $f \equiv 0$. More precisely, we recall the following family of SPDEs depending on a parameter $\eps>0$:
\begin{align}\label{eqeps}
\begin{aligned}
d\ups+(\pe(\ups)-\Delta \ups)\,dt &=g(\ups)\,dW(t), &&\text{ in }\Omega\times(0,T)\times\Lambda;\\
\ups(0,\cdot)&=u_0, &&\text{ in } \Omega\times\Lambda;\\
\nabla \ups\cdot \mathbf{n}&=0, &&\text{ on }\Omega\times(0,T)\times\partial\Lambda;
\end{aligned}
\end{align}
where $\pe:\R\rightarrow \R$ denotes the Moreau-Yosida approximation of $\partial I_{[0,1]}$ (see, \textit{e.g.,} \cite{barbu76,brezis73}), defined for all $v\in\R$ by
\begin{eqnarray}\label{SPPR}
\pe(v)=-\frac{(v)^-}{\epsilon}+\frac{(v-1)^{+}}{\epsilon}=
\left\{\begin{array}{clll}
\frac{v}{\epsilon}&\text{if}&v\leq 0\\
0&\text{if}&v\in[0,1]\\
\frac{v-1}{\epsilon}&\text{if}&v\geq 1.
\end{array}\right.
\end{eqnarray}
Taking \eqref{eqeps} as a starting point, the following Euler-Maruyama-TPFA-scheme has been proposed in \cite{BSVZ25}:\\
For a $\mathcal{F}_0$-measurable random element $u_0\in L^2(\Omega;L^2(\Lambda))$ we define $\mathbb{P}$-a.s. in $\Omega$ its piecewise constant spatial discretization $u_h^0= \sum_{K \in \Tau} u_K^0 \mathds{1}_K$, where
\begin{align*}
u_K^0= \frac{1}{m_K} \int_K u_0(x) \, dx, ~~\forall K \in \mathcal{T}_h.
\end{align*}
Now, for any $n \in \{0,\dots,N-1\}$ and fixed $\eps>0$, assuming the random vector $u_h^n\equiv (u^{n}_K)_{K\in\Tau} \in \mathbb{R}^{d_h}$ to be given, the random vector $u_h^{n+1}\equiv(u^{n+1}_K)_{K\in\Tau}\in\mathbb{R}^{d_h}$ is defined to be the 
solution of the following equations:
\begin{align}\label{equationapprox}
\begin{split}
&\frac{m_K}{\Delta t}(u_K^{n+1}-\unk) +\sum_{\sigma=K|L\in\edgesint\cap\edges_K}\frac{m_\sigma}{\dkl}(u_K^{n+1}-u_L^{n+1})+m_K\pe(u_K^{n+1})\\
&=\frac{m_K}{\Delta t}g(\unk)\Delta_n W
\end{split}
\end{align}
for each $K\in \Tau$, $\mathbb{P}$-a.s in $\Omega$, where $\Delta_n W:= W(t_n) - W(t_{n-1})$ denotes the increments of the Brownian motion between $t_{n+1}$ and $t_n$. The following results are known:

\begin{prop}\cite[Proposition 3.2]{BSVZ25}\label{251205_p1}
 Let $\Tau$ be an admissible finite-volume mesh of $\Lambda$ in the sense of Definition \ref{defmesh} with a mesh size $h$. Moreover, let an initial $\mathcal{F}_0$-measurable random vector $u_h^0\equiv (u^0_K)_{K\in\Tau}\in\mathbb{R}^{d_h}$, $N\in\mathbb{N}$, and $\eps>0$ be given. Then, there exists a unique solution $(u_h^n)_{1\le n \le N} \in (\mathbb{R}^{d_h})^N$ to \eqref{equationapprox}. Moreover, for any $n\in \{0,\ldots,N\}$, $u_h^n$ is a $\mathcal{F}_{t_n}$-measurable random vector.
\end{prop}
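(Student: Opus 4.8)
The plan is to solve \eqref{equationapprox} pathwise, i.e.\ for $\mathbb{P}$-almost every fixed $\omega\in\Omega$, by recognizing the scheme as the Euler--Lagrange equation of a strictly convex, coercive functional on $\R^{d_h}$. Fix $n\in\{0,\dots,N-1\}$ and treat $u_h^n$ together with the increment $\Delta_n W$ as given data; collecting all known terms on the right-hand side, I write $b_K:=\frac{\mk}{\dt}\bigl(\unk+g(\unk)\Delta_n W\bigr)$. Let $\Psi_{\eps}$ denote the convex primitive of $\pe$ normalized by $\Psi_{\eps}'=\pe$, explicitly $\Psi_{\eps}(r)=\frac{1}{2\eps}\bigl(((r)^-)^2+((r-1)^+)^2\bigr)$, and define $J:\R^{d_h}\to\R$ by
\begin{align*}
J(v)=\frac{1}{2\dt}\sum_{K\in\Tau}\mk v_K^2+\frac12\sum_{\sigma=K|L\in\edgesint}\frac{\ms}{\dkl}(v_K-v_L)^2+\sum_{K\in\Tau}\mk\Psi_{\eps}(v_K)-\sum_{K\in\Tau}b_K v_K.
\end{align*}
A direct computation of $\partial_{v_K}J$ (the factor $\tfrac12$ cancelling the $2$ from differentiating the interface terms) shows that the system $\nabla J(v)=0$ is exactly \eqref{equationapprox}, so it suffices to produce a unique critical point of $J$.

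Next I would record the structural properties of $J$. Since $\Psi_{\eps}\in C^1(\R)$, the functional $J$ is $C^1$. Each summand is convex: the two quadratic forms are positive semidefinite and $\Psi_{\eps}$ is convex because $\pe$ is nondecreasing; moreover the time term $\frac{1}{2\dt}\sum_{K}\mk v_K^2$ is \emph{strictly} convex, as $\mk>0$ and $\dt>0$, whence $J$ is strictly convex. Setting $m_{\min}:=\min_{K\in\Tau}\mk>0$ (finitely many control volumes, each of positive volume) gives the coercivity bound $J(v)\ge\frac{m_{\min}}{2\dt}|v|^2-|b|\,|v|\to+\infty$ as $|v|\to\infty$. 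By the direct method of the calculus of variations $J$ attains a minimum, and strict convexity forces this minimizer to be the unique zero of $\nabla J$; I then define $u_h^{n+1}$ to be this minimizer.

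To prepare the measurability claim I would, in the same step, record the strong monotonicity of $\nabla J$: using that the diffusion quadratic form has positive semidefinite gradient and that $\pe$ is nondecreasing,
\begin{align*}
\bigl(\nabla J(v)-\nabla J(w)\bigr)\cdot(v-w)\ge\frac{m_{\min}}{\dt}|v-w|^2,
\end{align*}
which re-proves existence and uniqueness and, crucially, shows that the solution map $b\mapsto u_h^{n+1}$ is globally $\tfrac{\dt}{m_{\min}}$-Lipschitz. The measurability then follows by induction on $n$, the base case $u_h^0$ being $\mathcal{F}_0$-measurable by assumption. Assuming $u_h^n$ is $\mathcal{F}_{t_n}$-measurable, the data $b=(b_K)_K$ is a measurable function of $u_h^n$ and of $\Delta_n W$, hence $\mathcal{F}_{t_{n+1}}$-measurable since $\mathcal{F}_{t_n}\subset\mathcal{F}_{t_{n+1}}$ and the increment is $\mathcal{F}_{t_{n+1}}$-measurable; composing with the Lipschitz (thus Borel) solution map yields that $u_h^{n+1}$ is $\mathcal{F}_{t_{n+1}}$-measurable, closing the induction.

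The main obstacle I anticipate is not existence or uniqueness, which in finite dimensions is an elementary consequence of strict convexity and coercivity, but rather certifying that the pathwise solution depends measurably on $\omega$. Routing the argument through the strong-monotonicity estimate is what makes this clean: it delivers the Lipschitz dependence of the solution on the data for free and thereby reduces the measurability statement to a composition of measurable maps, avoiding any appeal to a measurable-selection theorem.
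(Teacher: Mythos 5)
Your argument is correct and complete. Note first that the paper itself does not prove this proposition: it is imported verbatim from \cite[Proposition 3.2]{BSVZ25}, so there is no in-paper proof to match against. Your variational route is a legitimate and self-contained alternative to the argument usually given for such schemes (in \cite{BSVZ25} and the precursor \cite{BNSZ22} existence is obtained from a Brouwer-type zero-of-a-vector-field lemma applied to the monotone map $v\mapsto \frac{\mk}{\dt}v_K+\sum_{\sigma=K|L\in\ekint}\frac{\ms}{\dkl}(v_K-v_L)+\mk\pe(v_K)-b_K$, uniqueness from strict monotonicity, and measurability from continuity of the solution operator). Recasting the scheme as $\nabla J=0$ for your strictly convex, coercive $J$ collapses existence and uniqueness into the direct method, and your strong-monotonicity estimate $(\nabla J(v)-\nabla J(w))\cdot(v-w)\ge \frac{m_{\min}}{\dt}|v-w|^2$ is exactly the right device for the measurability claim: it makes the solution map $b\mapsto u_h^{n+1}$ globally Lipschitz, hence Borel, so the induction closes by composition without any measurable-selection argument. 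All the computational details check out: $\Psi_{\eps}$ is the $C^1$ convex primitive of $\pe$, the gradient of the interface quadratic form reproduces the flux terms of \eqref{equationapprox}, and the convexity/coercivity claims are immediate from $\mk>0$ and the monotonicity of $\pe$. The only cosmetic caveat is the indexing of the Brownian increment, which the paper itself states inconsistently; your argument is insensitive to this since $\mathcal{F}_{t_n}\subset\mathcal{F}_{t_{n+1}}$ either way.
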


\begin{thm}\cite[Theorem 1.5]{BSVZ25}
Let $(u,\psi)$ be the unique solution of Problem \eqref{equation} in the sense of Definition \ref{solution}. 
Let $(\Tau_m)_{m\in \mathbb{N}}$ be a sequence of admissible finite-volume meshes of $\Lambda$ in the sense of Definition \ref{defmesh} such that the sequence of their mesh sizes $(h_m)_{m\in \mathbb{N}}$ tends to $0$. Let $(N_m)_{m\in \mathbb{N}}\subset \mathbb{N}$ be a sequence which tends to infinity and let $(\epsilon_m)_{m\in\mathbb{N}}$ be a sequence of positive real numbers such that $\lim_{m\rightarrow+\infty} \epsilon_m=0$. For $m\in\mathbb{N}$, let $u_{h_m,N_m}$ be the piecewise constant function induced by the solutions of \eqref{equationapprox} according to Proposition \ref{251205_p1}.
If there exists $\theta>0$ such that, for any $m\in \mathbb{N}$, $\frac{T}{N_m}=\mathcal{O}((\epsilon_m)^{2+\theta})$, then
the sequence $(u_{h_m,N_m})_{m\in \mathbb{N}}$ converges to $u$ and the sequence $(\psi_{\epsilon_m}(u_{h_m,N_m}))_{m\in \mathbb{N}}$ converges to $\psi$, strongly in $L^2(\Omega;L^2(0,T;L^2(\Lambda)))$. Moreover,  the convergence of $(u_{h_m,N_m}))_{m\in \mathbb{N}}$ towards $u$ also holds strongly in $L^{p}(0,T; L^2(\Omega;L^2(\Lambda)))$ for any finite $p\geq 1$.
\end{thm}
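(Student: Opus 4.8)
The strategy follows the template for convergence of finite-volume approximations of stochastic parabolic problems established in \cite{BNSZ22,BSZ25}: derive a priori estimates that are uniform in the triple $(h_m,N_m,\eps_m)$, extract weakly convergent subsequences, pass to the limit in a discrete variational formulation of \eqref{equationapprox}, identify the limit pair $(u,\psi)$ as the solution in the sense of Definition \ref{solution}, and finally use uniqueness of that solution together with an energy argument to promote weak to strong convergence of the whole sequence. The entirely new difficulty here is that the penalty $\pe$ is stiff, with Lipschitz constant $1/\eps_m\to\infty$, so the crux of the proof is a \emph{uniform} $L^2$-bound on $\pe(u_{h_m,N_m})$; this is where the coupling condition $T/N_m=\mathcal{O}(\eps_m^{2+\theta})$ and the support property $\operatorname{supp}g\subseteq[0,1]$ are indispensable.

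First I would establish the basic energy estimates. Testing \eqref{equationapprox} with $u_K^{n+1}$, summing over $K\in\Tau$ and using $a(a-b)=\tfrac12(a^2-b^2)+\tfrac12(a-b)^2$ for the discrete time derivative, the flux term reproduces the discrete $H^1$-seminorm while the penalty contributes $\mk\,\pe(u_K^{n+1})u_K^{n+1}\ge0$. Writing the explicit noise term as a martingale increment (its $\mathcal{F}_{t_n}$-conditional expectation vanishes by Proposition \ref{251205_p1} and the independence of $\Delta_n W$) with quadratic variation controlled by the Lipschitz bound on $g$, taking expectations, invoking a discrete Burkholder--Davis--Gundy inequality and a discrete Gronwall argument yields, uniformly in $m$,
\begin{align*}
\erww{\max_{0\le n\le N_m}\|u_{h_m}^n\|_{L^2(\Lambda)}^2}
+\erww{\sum_{n}\dt\!\!\sum_{\sigma=K|L}\frac{\ms}{\dkl}(u_K^{n}-u_L^{n})^2}
+\erww{\sum_{n}\dt\sum_{K}\mk\,\pe(u_K^{n})\,u_K^{n}}\le C.
\end{align*}
These bounds control $u_{h_m,N_m}$ in $L^2(\Omega\times(0,T);H^1(\Lambda))$ (discrete gradient) but do \emph{not} yet bound $\pe(u_{h_m,N_m})$ in $L^2$.

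The hard part is precisely this missing estimate. I would test \eqref{equationapprox} with $\pe(u_K^{n+1})$: the deterministic terms are favourable---convexity of the Moreau--Yosida primitive $\hat\psi_{\eps_m}$ makes $(u_K^{n+1}-u_K^{n})\pe(u_K^{n+1})\ge\hat\psi_{\eps_m}(u_K^{n+1})-\hat\psi_{\eps_m}(u_K^{n})$ telescope, monotonicity of $\pe$ turns the flux term into a nonnegative discrete Dirichlet energy of $\pe(u_{h_m,N_m})$, and one is left with the target term $\erww{\sum_n\dt\sum_K\mk\,\pe(u_K^{n})^2}$. The obstruction is the stochastic contribution $\erww{\sum_n\sum_K\mk\,g(\unk)\Delta_n W\,\pe(u_K^{n+1})}$: since $\pe(u_K^{n+1})$ is correlated with $\Delta_n W$, a crude estimate using the $1/\eps_m$-Lipschitz bound produces a term of order $T/\eps_m$, which diverges. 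Here the support condition $\operatorname{supp}g\subseteq[0,1]$ is decisive---$g$ vanishes (and, being Lipschitz with $g(0)=g(1)=0$, is small) exactly on the region where $\pe'$ is nonzero---and, combined with the coupling condition, it reduces this residual to order $\dt/\eps_m^2=\mathcal{O}(\eps_m^{\theta})\to0$. This delivers the uniform bound $\erww{\int_0^T\|\pe(u_{h_m,N_m})\|_{L^2(\Lambda)}^2}\le C$, which I regard as the main obstacle of the whole argument.

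With these uniform bounds in hand the remaining steps are comparatively standard. Up to a subsequence, $u_{h_m,N_m}\rightharpoonup u$ weakly in $L^2(\Omega\times(0,T);H^1(\Lambda))$ with the discrete gradients converging to $\nabla u$ in the sense of \cite{EymardGallouetHerbinBook}, and $\pe(u_{h_m,N_m})\rightharpoonup\psi$ weakly in $L^2(\Omega\times(0,T);L^2(\Lambda))$. Passing to the limit term by term in the variational form of \eqref{equationapprox} against predictable piecewise-constant test functions---the diffusion and penalty terms by weak convergence, the discrete stochastic integral to $\int_0^t g(u)\,dW$ using the Lipschitz bound on $g$ and the $\mathcal{F}_{t_n}$-adaptedness---shows that $(u,\psi)$ satisfies \eqref{eq 161225_01}. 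The constraint $0\le u\le1$ follows from the identity $\eps_m^2\,\pe(v)^2=(v^-)^2+((v-1)^+)^2$, which together with the uniform Yosida bound gives $\erww{\int_0^T\|u_{h_m,N_m}^-\|_{L^2}^2+\|(u_{h_m,N_m}-1)^+\|_{L^2}^2}\le C\eps_m^2\to0$; the inclusion $\psi\in\partial I_{[0,1]}(u)$ is then obtained by a Minty-type monotonicity argument, using $\pe(v)\in\partial I_{[0,1]}(J_{\eps_m}v)$ and the convergence of the discrete energies. Since \eqref{equation} admits a unique solution by \cite{BBBLV17}, the limit is independent of the subsequence and the full sequence converges. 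Finally, matching the $\liminf$ of the discrete energy identity with the Itô energy identity for $(u,\psi)$ upgrades the weak convergences of both $u_{h_m,N_m}$ and $\pe(u_{h_m,N_m})$ to strong convergence in $L^2(\Omega\times(0,T);L^2(\Lambda))$, and the uniform moment bound together with a Vitali/uniform-integrability argument yields the strong convergence of $u_{h_m,N_m}$ in $L^{p}(0,T;L^2(\Omega;L^2(\Lambda)))$ for every finite $p\ge1$.
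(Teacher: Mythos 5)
The first thing to note is that this paper does not prove the statement at all: the theorem is quoted verbatim from \cite{BSVZ25} (Theorem 1.5 there) and used as a black box, so there is no in-paper proof to compare your attempt against. The only information the present paper gives about the argument is the description in its introduction: the set-valued constraint is replaced by its Yosida approximation, the passage to the limit is performed simultaneously in the mesh size, the number of time steps and the regularization parameter, and convergence holds under the coupling condition $\frac{T}{N_m}=\mathcal{O}((\epsilon_m)^{2+\theta})$. Your outline is consistent with that description and with the general template of \cite{BNSZ22,BSZ25}, so as a plan it is credible.

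As a proof, however, it is incomplete exactly where you yourself locate the difficulty. The uniform bound on $\erww{\int_0^T \Vert \pe(u_{h_m,N_m})\Vert_{L^2(\Lambda)}^2\,dt}$ is asserted, not derived: you claim that the support condition on $g$ together with the coupling condition \enquote{reduces this residual to order $\dt/\eps_m^2=\mathcal{O}(\eps_m^{\theta})$}, but you give no computation, and the bookkeeping as written does not obviously close. If a residual of size $\dt/\eps_m^2$ appears once per time step, the sum over $n$ is of order $T/\eps_m^2$ and diverges; to conclude you need the per-step contribution to be $\mathcal{O}(\dt^2/\eps_m^2)$ or better, which requires an actual decorrelation argument (for instance, replacing $\pe(u_K^{n+1})$ by an $\mathcal{F}_{t_n}$-measurable quantity plus an increment controlled through the equation and the Lipschitz constant $1/\eps_m$ of $\pe$) combined with a genuinely quantitative use of $\operatorname{supp} g\subset[0,1]$. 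Without this, the central estimate on which everything else rests is missing. The remaining steps you list (weak compactness, identification of $\psi\in\partial I_{[0,1]}(u)$ by a Minty-type argument, uniqueness to get convergence of the full sequence, matching of energy identities to upgrade to strong convergence, and a Vitali argument for the $L^p(0,T;L^2(\Omega;L^2(\Lambda)))$ statement) are standard and correctly ordered, but they all hinge on the bound you have not established, so the proposal should be regarded as a plausible roadmap rather than a proof.
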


Our aim is to study the properties of the scheme \eqref{equationapprox} through numerical experiments. Besides the questions mentioned above, we provide and discuss numerical convergence rates with respect to the time step size for \eqref{equationapprox}. In Section \ref{splitting method}, we will introduce a splitting method to implement the nonlinear term coming from the Yosida approximation into the scheme and study its convergence.
In Section \ref{num} we provide numerical experiments which are relevant for the study of the properties of \eqref{equationapprox} and provide an interpretation of our results. \\ 
\centerline{}
\section{Splitting method}\label{splitting method}
In this section we explain the splitting method we use in Section \ref{experiments} in the cases $\beta \equiv 0$ and $f \equiv 0$. For a vector $w=(w_K)_{K \in \Tau} \in \Rtd$ and any function $\phi: \R \to \R$ we will denote $\phi(w) := (\phi(w_K))_{K \in \Tau}$ and in this way we may consider $\pe$ as a non-linear function from $\Rtd$ to $\Rtd$. Let $u^{n-1} = (u^{n-1}_K)_{K \in \Tau} \in \Rtd$ be the solution of the FVS after $n-1$ time steps. Then, the solution $u^n = (u^n_K)_{K \in \Tau} \in \Rtd$ after $n$ time steps is given by the equation
\begin{align}\label{eq_011025_01}
    (M + \tau A)u^n + \tau M \pe(u^n) = M (u^{n-1} + g(u^{n-1}) \Delta_n W),
\end{align}
where $M = \operatorname{diag}((m_K)_{K \in \Tau})$ and $A = (a_{K,L})_{K,L \in \Tau} \in \R^{\tilde{d} \times \tilde{d}}$ is a sparse matrix such that
\begin{align*}
    \begin{cases}
        a_{K,K} &= \sum\limits_{\sigma \in \ekint} \frac{\ms}{\ds}, ~K \in \Tau \\
        a_{K,L} &= - \frac{\ms}{\ds},  ~~~~~~K \in \Tau, L \in \mathcal{N}(K), \sigma= K|L \\
        a_{K,L} &=0,  ~~~~~~~~~~\text{ else}.
    \end{cases}
\end{align*}
By construction, $A$ is symmetric and we have $\sum\limits_{L \in \Tau} a_{K,L} =0$ for any $K \in \Tau$. Hence, it is an easy task to conclude that $A$ is positive semi-definite w.r.t. the standard scalar product. The vector $u^n$ can be given in a closed formula by
\begin{align}\label{eq_011025_02}
    u^n = (M + \tau A + \tau M \pe)^{-1} M (u^{n-1} + g(u^{n-1}) \Delta_n W).
\end{align}
Due to the fact that $\pe$ is non-linear and given by a case distinction and $M+ \tau A$ is not a diagonal matrix, we cannot calculate $(M + \tau A + \tau M \pe)^{-1}$ on a computer exactly. To this end, let us divide the calculation of $u^n$ into two steps:
\begin{itemize}
    \item[Step 1.] Let $u^{n-1} \in \Rtd$ be given. Then we want to find $\hat{u}^n \in \Rtd$ such that
    \begin{align*}
        M(\hat{u}^n - u^{n-1}) + \tau A\hat{u}^n = M g(u^{n-1})\Delta_n W.
    \end{align*}
    Therefore $\hat{u}^n$ is given by
    \begin{align}\label{eq_011025_03}
        \hat{u}^n := (M + \tau A)^{-1} M (u^{n-1} + g(u^{n-1}) \Delta_n W)
    \end{align}
\item[Step 2:] For $\hat{u}^n$ given by \eqref{eq_011025_03}, we now want to find $u^n$ satisfying
    \begin{align*}
        (M + \tau A)u^n + \tau M \pe(u^n) = (M + \tau A)\hat{u}^n
    \end{align*}
    and therefore $u^n$ is given by
    \begin{align}\label{eq_011025_04}
        u^n + (M+ \tau A)^{-1} \tau M \pe(u^n) = \hat{u}^n.
    \end{align}
    If $u^{n-1}$ is the solution of the FVS after $n-1$ time steps, $u^n$ is the solution of the FVS after $n$ time steps. But due to the fact that $\pe$ is non-linear and defined by a case distinction and the fact that $M+\tau A $ is not a diagonal matrix, we cannot give a closed formula for $u^n$ which can be calculated by the computer properly. To this end, we want to cancel out the term $\tau A$ in equation \eqref{eq_011025_04} and solve equation
    \begin{align}\label{eq_011025_05}
        \tun + \tau \pe(\tun) = \hat{u}^n
    \end{align}
    for $\tun \in \Rtd$ instead. Equation \eqref{eq_011025_05} is vector-valued and since $\pe$ acts on each component separately, we can rewirte \eqref{eq_011025_05} as
    \begin{align}\label{eq_011025_06}
        \tun_K + \tau \pe(\tun_K) = \hat{u}^n_K \text{ for each } K \in \Tau.
    \end{align}
    Now, since $\pe$ is monotone, $I + \tau \pe$ is strictly monotone and therefore invertible. Even though $\pe$ is non-linear and given by a case distinction, the unique solution of \eqref{eq_011025_06} on each control volume $K \in \Tau$ is negative if and only if the right-hand side is negative, the solution is in $[0,1]$ if and only if the right-hand side is in $[0,1]$ and the solution is greater than $1$ if and only if the right-hand side is greater than $1$. Therefore, the solution of \eqref{eq_011025_06} is given by
    \begin{align}\label{eq_011025_07}
        \tun_K = (I + \tau \pe)^{-1}(\hat{u}^n_K) \text{ for each } K \in \Tau,
    \end{align}
    where $(I + \tau \pe)^{-1}$ is given by
    \begin{align*}
        (I + \tau \pe)^{-1}(r) = \begin{cases}
            \frac{\eps r}{\eps + \tau}, &r<0 \\
            r, &0 \leq r \leq 1 \\
            \frac{\eps r + \tau}{\eps + \tau}, &r >1.
        \end{cases}
    \end{align*}
    Hence, $\tun_K$ is given by
    \begin{align*}
        \tun_K = \begin{cases}
            \frac{\eps \hat{u}^n_K}{\eps + \tau}, &\hat{u}^n_K<0 \\
            \hat{u}^n_K, &0 \leq \hat{u}^n_K \leq 1 \\
            \frac{\eps \hat{u}^n_K + \tau}{\eps + \tau}, &\hat{u}^n_K >1.
        \end{cases}
    \end{align*}
\end{itemize}
This motivates the following definition:
\begin{defn}
    Let $n \in \{1,...,N\}$ and $u_0 \in \Rtd$ be given. Then, $\tun = (\tun_K)_{K \in \Tau} \in \Rtd$ defined by the recursion formula $\tilde{u}^0 = u^0$,
    \begin{align*}
        \tun = (I + \tau \pe)^{-1} (M + \tau A)^{-1} M(\tunm + g(\tunm) \Delta_n W), ~n \in \{1,...,N\}
    \end{align*}
    is called the solution of the splitting method with initial datum $u^0$.
\end{defn}
The upcoming theorem provides an error estimate between the solution of the FVS and the solution of the splitting method given the same initial datum $u^0$:
\begin{thm}
    Let $u^0 \in \Rtd$, $\eps >0$ and $N \in \N$ be given and $u^n$ the solution of the FVS and $\tun$ the solution of the splitting method, both with respect to $u^0$. Then there exists $C>0$ such that
    \begin{align*}
       \sup\limits_{n \in \{1,...,N\}} \erww{\max\limits_{K \in \Tau} |\un_K - \tun_K |} \leq C \frac{\tau}{\eps m_{\min}},
    \end{align*}
    where $m_{min} := \inf\limits_{K \in \Tau} m_K$.
\end{thm}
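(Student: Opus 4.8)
The plan is to interpolate between the finite-volume iterate $\un$ and the splitting iterate $\tun$ by an auxiliary sequence that separates the \emph{propagation} of the error from the \emph{local} splitting error. Given $\tunm$, let $\hat{u}^n := (M+\tau A)^{-1}M(\tunm + g(\tunm)\Delta_n W)$ be the Step-1 intermediate built from the splitting history, and let $w^n$ solve the \emph{exact} Step-2 equation \eqref{eq_011025_04} with this right-hand side, i.e. $(M+\tau A)w^n + \tau M\pe(w^n) = (M+\tau A)\hat{u}^n$, whereas $\tun$ solves the simplified \eqref{eq_011025_05}, $\tun + \tau\pe(\tun) = \hat{u}^n$. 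Writing $\un-\tun=(\un-w^n)+(w^n-\tun)$ isolates a stability term, which is the FVS step applied to $\unm$ versus $\tunm$, from a consistency term, which compares the two Step-2 solves sharing the same data $\hat{u}^n$.

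For the consistency term I would subtract the two Step-2 equations and use the identity $(M+\tau A)^{-1}\tau M = \tau I - \tau^{2}(M+\tau A)^{-1}A$, which gives $(w^n-\tun)+\tau(\pe(w^n)-\pe(\tun)) = \tau^{2}(M+\tau A)^{-1}A\,\pe(w^n)$. Since $I+\tau\pe$ is componentwise expansive by monotonicity of $\pe$, this yields the pointwise bound $|w^n_K-\tun_K|\le \tau^{2}\,|[(M+\tau A)^{-1}A\,\pe(w^n)]_K|$, which is crucially of order $\tau^{2}$ rather than $\tau$: this is exactly what makes the summation over $N=T/\tau$ steps produce an $O(\tau)$ total. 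Because $M+\tau A$ is an M-matrix with $(M+\tau A)^{-1}M\mathbf{1}=\mathbf{1}$, its inverse is nonnegative with $\|(M+\tau A)^{-1}\|_{\infty}\le 1/m_{\min}$, while $\|A\|_{\infty}\le 2\max_{K\in\Tau}\sum_{\sigma\in\ekint}\ms/\ds$. A discrete maximum principle for \eqref{eq_011025_04} (testing at the extremal cell, where $[Aw^n]_K\ge 0$) then bounds $\pe(w^n_K)\le (Q-1)^{+}/(\eps+\tau)$ with $Q$ controlled by $\|\hat{u}^n\|_{\infty}$, so $\|\pe(w^n)\|_{\infty}\le C(1+\|\hat{u}^n\|_{\infty})/\eps$ and the per-step consistency is of order $\tau^{2}/(\eps\, m_{\min})$, modulo the control of $\|\hat{u}^n\|_{\infty}$.

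The decisive point, and the main obstacle, is the stochastic stability. A pathwise $\ell^{\infty}$ contraction of the FVS step carries a factor $1+L_g|\Delta_n W|$, and $\erww{\prod_{n}(1+L_g|\Delta_n W|)}=(1+L_g\sqrt{2\tau/\pi})^{N}$ diverges as $\tau=T/N\to 0$; the estimate therefore \emph{cannot} be closed pathwise in the maximum norm. Instead I would close it in the $M$-weighted norm $\|x\|_M^{2}=\sum_{K}m_K x_K^{2}$: testing the difference of the two Step equations for $\un$ and $w^n$ with $\un-w^n$ and discarding the nonnegative contributions of $A$ and of the monotone $\pe$-term gives $\|\un-w^n\|_M\le\|R^{n-1}\|_M$ with $R^{n-1}=(\unm-\tunm)+(g(\unm)-g(\tunm))\Delta_n W$. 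Here independence and zero mean of $\Delta_n W$ make the cross term vanish in conditional expectation, so $\erww{\|R^{n-1}\|_M^{2}}\le(1+L_g^{2}\tau)\erww{\|\unm-\tunm\|_M^{2}}$ and the noise contributes only at order $\tau$ per step. The same mean-square discipline is needed for the a priori bound $\sup_n\erww{\|\hat{u}^n\|_{\infty}^{2}}\le C$ feeding Step 2: using that $B:=(M+\tau A)^{-1}M$ is row-stochastic (an $\ell^{\infty}$-nonexpansion), that $\operatorname{supp}g\subset[0,1]$, and that $(I+\tau\pe)^{-1}$ contracts the excess beyond $[0,1]$ by the factor $\eps/(\eps+\tau)$, an Ornstein--Uhlenbeck-type estimate in mean square (the drift $\tau/\eps$ balancing the $O(\tau)$ noise input) keeps the iterates within an $O(\sqrt{\eps})$ mean-square neighbourhood of $[0,1]$; I expect this a priori moment bound, uniform in $\tau,\eps$, to be the other delicate ingredient.

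Finally I would combine the two parts through $E_n:=\|\un-\tun\|_M\le\|R^{n-1}\|_M+\|w^n-\tun\|_M$, convert the consistency bound via $\|\cdot\|_M\le\sqrt{|\Lambda|}\,\|\cdot\|_{\infty}$ so that $\erww{\|w^n-\tun\|_M^{2}}\le C\tau^{4}/(\eps^{2}m_{\min}^{2})$, and apply Young's inequality to obtain the recursion $\erww{E_n^{2}}\le(1+C\tau)\erww{E_{n-1}^{2}}+C\tau^{3}/(\eps^{2}m_{\min}^{2})$. Discrete Grönwall over the $N=T/\tau$ steps then yields $\sup_n\erww{E_n^{2}}^{1/2}\le C\tau/(\eps\, m_{\min})$, and passing to the maximum norm by $\|x\|_{\infty}\le m_{\min}^{-1/2}\|x\|_M$ together with Jensen's inequality gives $\sup_n\erww{\max_{K\in\Tau}|\un_K-\tun_K|}\le C\tau/(\eps\, m_{\min})$ after absorbing the mesh-geometry constants into $C$. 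The one place that genuinely needs care is bookkeeping the exact power of $m_{\min}$ through the last $M$-to-$\ell^{\infty}$ conversion; keeping a single factor $1/m_{\min}$, as stated, is most cleanly achieved by estimating the consistency term directly in the maximum norm via the discrete maximum principle rather than through the detour $\|\cdot\|_M\le\sqrt{|\Lambda|}\,\|\cdot\|_{\infty}$.
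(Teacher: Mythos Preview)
Your strategy is essentially the paper's: isolate an $O(\tau^{2}/\eps)$ local splitting error, propagate it by nonexpansiveness in the $M$-weighted mean square (you are right that a pathwise $\ell^{\infty}$ closure fails because of the factor $1+L_g|\Delta_n W|$), and finish with discrete Gr\"onwall. The only genuine difference is which intermediate you insert. The paper inserts $T_{\tau}(w_{n-1})$ with $w_{n-1}=\unm+g(\unm)\Delta_n W$ and obtains the $\tau^{2}$ factor from the operator factorisation $S_{\tau}-T_{\tau}=\tau^{2}(I+\tau\pe)^{-1}(I+\tau\tA)^{-1}\tA\,\pe\,S_{\tau}$; you insert $w^{n}=S_{\tau}(\tunm+g(\tunm)\Delta_n W)$ and get the same $\tau^{2}$ by subtracting the two Step-2 equations. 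These are mirror decompositions and both are valid; the paper propagates through $T_{\tau}$, you through $S_{\tau}$. The practical advantage of the paper's choice is that the a~priori moment bound feeding the consistency term is simply $\sup_{n}\erww{|w_{n-1}|_{2}^{2}}<\infty$, the standard FVS energy estimate. Your version needs the analogous bound on the splitting history, and here your $\ell^{\infty}$ Ornstein--Uhlenbeck detour for $\hat u^{n}$ is the weakest part of the outline and is unnecessary: staying in the $M$-norm, $|\pe(w^{n})|_{M}\le\eps^{-1}|w^{n}|_{M}$ and $|w^{n}|_{M}\le|\tunm+g(\tunm)\Delta_n W|_{M}$ by nonexpansiveness of $S_{\tau}$, while $\sup_{n}\erww{|\tunm|_{M}^{2}}\le|u^{0}|_{M}^{2}+\Vert g\Vert_{\infty}^{2}|\Lambda|T$ follows from the same one-step energy inequality you already wrote for $S_{\tau}$, now applied to $T_{\tau}$. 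Doing it this way also resolves the $m_{\min}$ bookkeeping you flagged: the single factor $m_{\min}^{-1}$ in the statement arises solely from $\Vert\tA\Vert\le C/m_{\min}$, with no additional loss in the final $\ell^{2}\to\ell^{\infty}$ conversion.
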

\begin{proof}
    Let us set $\tA:= M^{-1} A$. Then we have
    \begin{align*}
        (M + \tau A + \tau M \pe)^{-1} M = (I + \tau \tA + \tau \pe)^{-1} = (I + \tau(\tA + \pe))^{-1}
    \end{align*}
    and
    \begin{align*}
        (I + \tau \pe)^{-1} (M + \tau A)^{-1} M = (I + \tau \pe)^{-1} (I + \tau \tA)^{-1}.
    \end{align*}
    For $B: \Rtd \to \Rtd$ we set $\Vert B \Vert_2 := \sup\limits_{x \neq y}\frac{|B(x) - B(y)|_2}{|x-y|_2}$, where $|\cdot|_2$ denotes the euclidian norm in $\Rtd$. Then there exists $C_1 = C_1(\xi)>0$ such that
    \begin{align*}
        \Vert \tA \Vert_2 \leq \Vert M^{-1} \Vert_2 \Vert A \Vert_2 \leq \frac{C_1}{m_{\min}}.
    \end{align*}
    First, we want to prove the theorem in the case where $M$ is of the form $M = m_{\min}I$. Then $\tA = m_{min}^{-1} A$ and therefore $\tA$ is positive semi-definite w.r.t. the standard scalar product and therefore monotone as a function from $\Rtd$ to $\Rtd$. Since $\pe$ is monotone, we also have that $\tA + \pe$ is monotone and therefore
    \begin{align}\label{eq_021025_01}
        \begin{aligned}
            \Vert (I + \tau \tA)^{-1} \Vert_2 &\leq 1, \\
            \Vert (I + \tau \pe)^{-1} \Vert_2 &\leq 1, \\
            \Vert (I + \tau (\tA+ \pe)^{-1} \Vert_2 &\leq 1.
        \end{aligned}
    \end{align}
    Let us denote the abbreviations
    \begin{align*}
        \Stau &:= (I + \tau (\tA+ \pe))^{-1}, \\
        \Ttau &:= (I + \tau \pe)^{-1} (I + \tau \tA)^{-1}, \\
        e_n &:= u^n - \tun, \\
        w_{n-1} &:= u^{n-1} + g(u^{n-1})\Delta_n W, \\
        h_{n-1} &:= (g(u^{n-1}) - g(\tunm))\Delta_n W.
    \end{align*}
    Then we have
    \begin{align*}
        u^n &= \Stau(w_{n-1}), \\
        \tun &= \Ttau(\tunm + g(\tunm)\Delta_n W) = \Ttau (w_{n-1} - e_{n-1} - h_{n-1}).
    \end{align*}
    Hence
    \begin{align*}
        e_n &= u^n - \tun = \Stau(w_{n-1}) - \Ttau (w_{n-1} - e_{n-1} - h_{n-1}) \\
        &= \Stau(w_{n-1}) - \Ttau(w_{n-1}) + \Ttau(w_{n-1}) - \Ttau (w_{n-1} - e_{n-1} - h_{n-1}).
    \end{align*}
    Thanks to the inequality $(x+y)^2 \leq (1+ \frac{1}{\tau})x^2 + (1 + \tau)y^2$, we obtain
    \begin{align*}
        \erww{|e_n|_2^2} &\leq (1+ \frac{1}{\tau})\erww{|(\Stau - \Ttau)(w_{n-1})|_2^2} \\
        &+ (1 + \tau) \erww{|\Ttau(w_{n-1}) - \Ttau(w_{n-1} - e_{n-1} - h_{n-1})|_2^2} \\
        &\leq (1+ \frac{1}{\tau}) \Vert \Stau - \Ttau \Vert_2^2 \erww{|w_{n-1}|_2^2} \\
        &+ (1+ \tau)\Vert \Ttau \Vert_2^2 \erww{|e_{n-1} + h_{n-1}|_2^2}.
    \end{align*}
    Since
    \begin{align*}
        \Stau - \Ttau = \tau^2(I + \tau \pe)^{-1}(I + \tau \tA)^{-1} \tA \pe (I + \tau(\tA + \pe))^{-1},
    \end{align*}
    we have
    \begin{align}\label{eq_021025_02}
        \begin{aligned}
            \Vert \Stau - \Ttau \Vert_2 &\leq \tau^2 \Vert (I + \tau \pe)^{-1}\Vert_2 \Vert (I + \tau \tA)^{-1} \Vert_2 \Vert \tA \pe \Vert_2 \Vert (I + \tau(\tA + \pe))^{-1} \Vert_2 \\
            &\leq \tau^2 \Vert \tA \pe \Vert_2 \leq \frac{\tau^2}{\eps} \Vert \tA \Vert_2.
        \end{aligned}
    \end{align}
    Moreover, there exists $C_2>0$ such that $\sup\limits_{N \in \N} \sup\limits_{n \in \{0,...,N\}} \erww{|\unm|_2^2} \leq C_1$ and therefore we get
    \begin{align}\label{eq_021025_03}
        \begin{aligned}
            \erww{|w_{n-1}|_2^2} &= \erww{|\unm|_2^2 + 2 \unm \cdot g(\unm)\Delta_n W + |g(\unm) \Delta_n W|_2^2} \\
            &=  \erww{|\unm|_2^2 + |g(\unm)|_2^2 |\Delta_n W|^2} \\
            &= \erww{|\unm|_2^2} + \tau \erww{|g(\unm)|_2^2} \\
            &\leq (1+ L_g^2 \tau)\erww{|\unm|_2^2} \\
            &\leq (1+ L_g^2 T)C_2
        \end{aligned}
    \end{align}
    and
    \begin{align}\label{eq_021025_04}
        \begin{aligned}
            \erww{|e_{n-1} + h_{n-1}|_2^2} &= \erww{|e_{n-1}|_2^2 + 2 e_{n-1} \cdot h_{n-1} + |h_{n-1}|_2^2} \\
            &= \erww{|e_{n-1}|_2^2 + |g(\unm) - g(\tunm)|_2^2 |\Delta_n W|^2} \\
            &= \erww{|e_{n-1}|_2^2} + \tau \erww{|g(\unm) - g(\tunm)|_2^2} \\
            &\leq (1+ L_g^2\tau)\erww{|e_{n-1}|_2^2}.
        \end{aligned}
    \end{align}
    Thanks to \eqref{eq_021025_01} to \eqref{eq_021025_04} we obtain
    \begin{align*}
        \erww{|e_n|_2^2} &\leq (1+ \frac{1}{\tau}) \frac{\tau^4}{\eps^2} \Vert \tA \Vert_2^2 (1+ L_g^2 T)C_2 \\
        &+ (1+ \tau)(1+ L_g^2 \tau) \erww{|e_{n-1}|_2^2} \\
        &= \erww{|e_{n-1}|_2^2} + (1+ \frac{1}{\tau}) \frac{\tau^4}{\eps^2} \Vert \tA \Vert_2^2 (1+ L_g^2 T)C_2 \\
        &+ \tau(1 + L_g^2 + L_g^2\tau) \erww{|e_{n-1}|_2^2} \\
        &\leq \erww{|e_{n-1}|_2^2} + \frac{(T+1)\tau^3}{\eps^2} \Vert \tA \Vert_2^2 (1+ L_g^2 T)C_2 \\
        &+ \tau(1 + L_g^2 + L_g^2T) \erww{|e_{n-1}|_2^2} \\
    \end{align*}
    Subtracting $\erww{|e_{n-1}|_2^2}$, replacing $n$ by $k$ and summing over $k=1,...,n$ yields
    \begin{align*}
        \erww{|e_n|_2^2} &\leq \frac{T(T+1)\tau^2}{\eps^2} \Vert \tA \Vert_2^2 (1+ L_g^2 T)C_2 \\
        &+ \tau(1 + L_g^2 + L_g^2T) \sum\limits_{k=1}^n\erww{|e_{k-1}|_2^2} \\
    \end{align*}
    From Gronwall's lemma we obtain a constant $C_3>0$ such that
    \begin{align*}
        \sup\limits_{n \in \{1,...,N\}} \erww{|e_n|_2^2} \leq C_3 \Vert \tA \Vert_2^2 \frac{\tau^2}{\eps^2} \leq C_3 C_1 \frac{\tau^2}{\eps^2 m_{min}^2}.
    \end{align*}
    Hence we get
    \begin{align*}
        \sup\limits_{n \in \{1,...,N\}} \erww{\sup\limits_{K \in \Tau}|\un_K - \tun_K|} \leq \left(\sup\limits_{n \in \{1,...,N\}} \erww{|e_n|_2^2}\right)^{\frac{1}{2}} \leq \left( C_3C_1\right)^{\frac{1}{2}} \frac{\tau}{\eps m_{min}}.
    \end{align*}
    In the general case where $M$ is no longer necessarily of the form $M = m_{\min} I$, $\tA$ is no longer symmetric and therefore not positive semi-definite w.r.t. the euclidian scalar product in general. Nevertheless, defining the scalar product $\langle x , y \rangle_M := x^T M y$ for $x,y \in \Rtd$ we have
    \begin{align*}
        \langle x , \tA x \rangle_M = x^T A x \geq 0
    \end{align*}
    for any $x \in \Rtd$. Hence, $\tA$ is positive semi-definite w.r.t. the scalar product $\langle \cdot, \cdot \rangle_M$ and we may repeat all previous arguments by replacing the euclidian scalar product with the scalar product $\langle \cdot, \cdot \rangle_M$. Since we have
    \begin{align*}
        m_{\min} | \cdot |_2 \leq | \cdot |_M \leq m_{\max} | \cdot |_2,
    \end{align*}
    where $m_{\max} = \sup\limits_{K \in \Tau} m_K$ and $|x|_M = \langle x, x \rangle_M$, we get
    \begin{align*}
        \sup\limits_{N \in \N} \sup\limits_{n \in \{0,...,N\}} \erww{|\unm|_M^2} \leq m_{\max}^2 \sup\limits_{N \in \N} \sup\limits_{n \in \{0,...,N\}} \erww{|\unm|_2^2}\leq m_{\max}^2 C_1.
    \end{align*}
    Since there exists a constant $C_4>0$ such that $m_{\max} / m_{\min} \leq C_4 \reg$, there exists a constant $C = C(\reg)>0$ such that
    \begin{align*}
        &\sup\limits_{n \in \{1,...,N\}} \erww{\sup\limits_{K \in \Tau}|\un_K - \tun_K|} \leq \left(\sup\limits_{n \in \{1,...,N\}} \erww{|e_n|_2^2}\right)^{\frac{1}{2}} \\
        &\leq \left( \frac{1}{m_{\min}^2}\sup\limits_{n \in \{1,...,N\}} \erww{|e_n|_2^2}\right)^{\frac{1}{2}} \leq \left(\frac{m_{\max}^2}{m_{\min}^2} C_3C_1\right)^{\frac{1}{2}} \frac{\tau}{\eps m_{min}} \\
        &= \frac{m_{\max}}{m_{\min}} \left(C_3C_1\right)^{\frac{1}{2}}\frac{\tau}{\eps m_{min}} \\
        &\leq C \frac{\tau}{\eps m_{min}}.
    \end{align*}
\end{proof}
The following lemma states some more properties of the matrix $(M + \tau A)^{-1} M$. If $Z$ is a vector or a matrix, we denote $Z \geq 0$ if and only if all components of $Z$ are non-negative. Similarly, we denote $Z \leq 0$ if and only if $-Z \geq 0$.
\begin{lem}\label{Lemma 151025_01}
    Let $x \in \Rtd$ with $x \geq 0$. Then $(M+ \tau A)^{-1}Mx \geq 0$.
\end{lem}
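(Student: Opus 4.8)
The plan is to recognise that $B := M + \tau A$ is a nonsingular $M$-matrix and to exploit the fact that the inverse of such a matrix is entrywise non-negative. Once $B^{-1} \ge 0$ is established, the claim follows immediately: since $M$ is diagonal with strictly positive entries, we have $Mx \ge 0$ whenever $x \ge 0$, and hence $(M + \tau A)^{-1} M x = B^{-1}(Mx) \ge 0$ as the product of a non-negative matrix with a non-negative vector.

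First I would record the sign pattern of $B$. Its diagonal entries are $B_{K,K} = m_K + \tau a_{K,K} = m_K + \tau \sum_{\sigma \in \ekint} \frac{\ms}{\ds} > 0$, while its off-diagonal entries are $B_{K,L} = \tau a_{K,L} = -\tau \frac{\ms}{\ds} \le 0$ for neighbours $L \in \mathcal{N}(K)$ and $0$ otherwise. Thus $B$ is a $Z$-matrix (positive diagonal, non-positive off-diagonal). Next I would use the identity $\sum_{L \in \Tau} a_{K,L} = 0$ already recorded in the excerpt: because the off-diagonal entries are non-positive, it yields $a_{K,K} = \sum_{L \ne K} |a_{K,L}|$, so the sum of the absolute values of the off-diagonal entries of $B$ in row $K$ equals $\tau a_{K,K}$, which is strictly smaller than the diagonal entry $m_K + \tau a_{K,K}$ by the margin $m_K > 0$. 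Hence $B$ is strictly diagonally dominant by rows, which in particular re-proves its invertibility.

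To turn strict diagonal dominance into entrywise non-negativity of $B^{-1}$ in a self-contained way, I would factor $B = D(I - C)$ with $D := \operatorname{diag}(B_{K,K})$ and $C := I - D^{-1}B = D^{-1}(D - B)$. By the sign pattern, $C$ has zero diagonal and non-negative entries $C_{K,L} = \tau \tfrac{\ms}{\ds}/B_{K,K} \ge 0$, and the row sums computed above give $\|C\|_\infty = \max_K \frac{\tau a_{K,K}}{m_K + \tau a_{K,K}} < 1$. Therefore the Neumann series $\sum_{k \ge 0} C^k$ converges to $(I - C)^{-1}$ and is entrywise non-negative, since each $C^k \ge 0$ as a product of non-negative matrices; consequently $B^{-1} = (I - C)^{-1} D^{-1} \ge 0$ because $D^{-1} \ge 0$. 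Combining this with $Mx \ge 0$ completes the argument.

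I would expect the only delicate point to be the bookkeeping for strict diagonal dominance, namely making sure the dominance margin is exactly the positive mass $m_K$ rather than something that could degenerate. This is where the term $M$ (equivalently, the positivity of the control-volume measures $m_K$) is essential: the matrix $A$ alone is only weakly diagonally dominant, as its row sums vanish, so $A$ is singular and the $M$-matrix conclusion would fail; adding $M$ is precisely what pushes $B$ into the strictly dominant, invertible regime. Alternatively, one could invoke the standard characterisation of nonsingular $M$-matrices to obtain $B^{-1} \ge 0$ directly, but the Neumann-series factorisation keeps the proof elementary and entirely within the notation already introduced.
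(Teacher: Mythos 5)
Your proof is correct, and it reaches the conclusion by a genuinely different route than the paper. The paper's own argument is shorter but leans on a classical theorem: it observes that $A$ is symmetric positive semi-definite with non-positive off-diagonal entries, so $M+\tau A$ is symmetric positive definite with non-positive off-diagonal entries, i.e.\ a Stieltjes matrix, and then cites the standard fact that the inverse of a Stieltjes matrix is entrywise non-negative. You instead establish inverse-positivity from scratch: you verify the $Z$-matrix sign pattern, use the row-sum identity $\sum_{L} a_{K,L}=0$ to show that $M+\tau A$ is strictly diagonally dominant with dominance margin exactly $m_K>0$, and then run the Neumann-series factorisation $M+\tau A = D(I-C)$ with $\Vert C \Vert_\infty<1$ and $C\ge 0$ to get $(M+\tau A)^{-1}=(I-C)^{-1}D^{-1}\ge 0$ termwise. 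Both arguments are sound; the paper's buys brevity at the cost of invoking the Stieltjes/$M$-matrix theorem as a black box (and of using symmetry and positive definiteness, which your argument does not need), while yours is self-contained, elementary, and makes explicit that the positivity of the cell measures $m_K$ is precisely what upgrades the weak diagonal dominance of $A$ to the strict dominance that guarantees invertibility and inverse-positivity. Your closing remark that one could alternatively invoke the nonsingular $M$-matrix characterisation is exactly the paper's route.
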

\begin{proof}
    Since $A$ is symmetric and positive semi-definite with non-positive off-diagonal components, $M+ \tau A$ is symmetric and positive definite with non-positive off-diagonal components. Hence, $M+ \tau A$ is a Stieltjes matrix and therefore $(M+ \tau A)^{-1} \geq 0$. Especially, if $x \in \Rtd$ with $x \geq 0$, then $(M + \tau A)^{-1} x \geq 0$.
\end{proof}
Now we show that if pathwise the values of all components of the solution of the splitting method after $n-1$ time steps $\tunm$ are below the threshold $0$ (or above the threshold $1$, respectively), then this will be the case for all subsequent time steps. In the following, we will denote by $\mathbf{1} \in \Rtd$ the vector whose components are all equal to $1$.
\begin{cor}\label{Corollary 151025_01}
    Let $\tunm \in \Rtd$ such that $\tunm \leq 0$ or $\tunm - \mathbf{1} \geq 0$, respectively. Moreover, let $\tun \in \Rtd$ be the solution of the splitting method. Then $\tun \leq 0$ or $\tun - \mathbf{1} \geq 0$, respectively. 
\end{cor}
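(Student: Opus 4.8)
The plan is to reduce the recursion to its deterministic drift and then propagate the sign/threshold information through the two operators composing it. The starting observation is that the noise term drops out in both regimes. Since $g$ is Lipschitz (hence continuous) with $\operatorname{supp} g \subset [0,1]$, it vanishes on $(-\infty,0)\cup(1,\infty)$, and by continuity also $g(0)=g(1)=0$; thus $g\equiv 0$ on all of $(-\infty,0]\cup[1,\infty)$. Consequently, whenever $\tunm\le 0$ or $\tunm-\mathbf{1}\ge 0$ holds componentwise, every component of $\tunm$ lies in a zero set of $g$, so $g(\tunm)=0$ and the defining recursion collapses to
\[
\tun=(I+\tau\pe)^{-1}(M+\tau A)^{-1}M\,\tunm .
\]
I would then write $\hat{u}^n:=(M+\tau A)^{-1}M\,\tunm$ and treat the two factors separately.

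For the threshold $0$ the argument is immediate: $\tunm\le 0$ means $-\tunm\ge 0$, so Lemma \ref{Lemma 151025_01} gives $(M+\tau A)^{-1}M(-\tunm)\ge 0$, i.e. $\hat{u}^n\le 0$. Because $(I+\tau\pe)^{-1}$ acts componentwise, sending each $r<0$ to $\tfrac{\eps r}{\eps+\tau}<0$ and fixing $r=0$, it preserves non-positivity, and therefore $\tun=(I+\tau\pe)^{-1}(\hat{u}^n)\le 0$.

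The threshold $1$ is where the real work lies, since Lemma \ref{Lemma 151025_01} only delivers non-negativity, not a lower bound of $1$. The key is to exploit the zero-row-sum property $\sum_{L}a_{K,L}=0$, which is exactly the statement $A\mathbf{1}=0$; hence $(M+\tau A)\mathbf{1}=M\mathbf{1}$, so the constant vector $\mathbf{1}$ is a fixed point of $(M+\tau A)^{-1}M$. Decomposing $\tunm=\mathbf{1}+(\tunm-\mathbf{1})$ with $\tunm-\mathbf{1}\ge 0$ and applying Lemma \ref{Lemma 151025_01} to the second summand yields
\[
\hat{u}^n=\mathbf{1}+(M+\tau A)^{-1}M(\tunm-\mathbf{1})\ge \mathbf{1}.
\]
To finish, I would use that the explicit form of $(I+\tau\pe)^{-1}$ preserves the lower bound $1$ componentwise: it fixes $r=1$, and for $r>1$ one has $\tfrac{\eps r+\tau}{\eps+\tau}\ge 1\iff r\ge 1$. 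This gives $\tun-\mathbf{1}\ge 0$. The only genuinely delicate point is recognising that $\mathbf{1}$ is invariant under $(M+\tau A)^{-1}M$ via $A\mathbf{1}=0$; once that is in hand, Lemma \ref{Lemma 151025_01} together with the componentwise monotonicity of $(I+\tau\pe)^{-1}$ closes both cases routinely.
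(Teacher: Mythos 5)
Your proposal is correct and follows essentially the same route as the paper's own proof: the noise term vanishes because $\operatorname{supp} g\subset[0,1]$, Lemma \ref{Lemma 151025_01} handles the threshold $0$, and the identity $A\mathbf{1}=0$ (so that $\mathbf{1}$ is fixed by $(M+\tau A)^{-1}M$) combined with the monotonicity of $(I+\tau\pe)^{-1}$ handles the threshold $1$. Your extra remark that continuity of $g$ forces $g(0)=g(1)=0$ is a welcome refinement of a detail the paper leaves implicit.
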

\begin{proof}
    Let $\tunm \in \Rtd$ with $\tunm \leq 0$ or $\tunm - \mathbf{1} \geq 0$, respectively. Then, since $\operatorname{supp}(g) \subset [0,1]$ we have $g(\tunm) = 0 \in \Rtd$ and therefore we obtain
    \begin{align*}
        \tun &= (I + \tau \pe)^{-1}(M + \tau A)^{-1}M (\tunm + g(\tunm) \Delta_n W) \\
        &= (I + \tau \pe)^{-1}(M + \tau A)^{-1}M(\tunm).
    \end{align*}
    Now, if $\tunm \leq 0$ Lemma \ref{Lemma 151025_01} yields $(M + \tau A)^{-1}M(\tunm) \leq 0$ and therefore $\tun \leq 0$. Since $A \mathbf{1} = 0$ we have
    \begin{equation*}
        \mathbf{1} = (M + \tau A)^{-1} M \mathbf{1}.
    \end{equation*}
    Therefore, if $\tunm - \mathbf{1} \geq 0$ Lemma \ref{Lemma 151025_01} yields
    \begin{align*}
       (M + \tau A )^{-1}M(\tunm) - \mathbf{1} = (M + \tau A )^{-1}M(\tunm - \mathbf{1}) \geq 0.
    \end{align*}
    Hence, since $(I + \tau \pe)^{-1}$ is non-decreasing and $(I + \tau \pe)^{-1} \mathbf{1} = \mathbf{1}$, we obtain $\tun - \mathbf{1} \geq 0$.
\end{proof}
\section{Numerical experiments}\label{num}
All upcoming experiments were performed on an Apple M2 Pro with 16GB RAM in Python. Therein we use $\Lambda =(-1,1)^2$, $T=1$ and
\begin{equation*}
    u_0(x,y) = \left(\frac{1}{16}x^4 + \frac{1}{4}x^3 - \frac{1}{8}x^2 - \frac{3}{4}x + \frac{9}{16}\right)\left(\frac{3}{32}y^4 - \frac{1}{4}y^3 - \frac{3}{16}y^2 + \frac{3}{4}y +\frac{19}{32}\right)
\end{equation*}
as a non-symmetric and non-trivial initial value satisfying the homogeneous Neumann boundary condition and $0 \leq u_0(x,y) \leq 1$ for all $x,y \in [-1,1]$. We set $u^0 := u_h^0 := (u_K^0)_{K \in \Tau}$ as in Section \ref{Section Introduction}. Unless otherwise specified, for the noise term $g$ we use
\begin{align*}
    g(x) = ax(1-x)\mathds{1}_{[0,1]}(x), ~x \in \R
\end{align*}
for different parameters $a>0$. In order to compute the expectation we use the classical Monte Carlo method with $N_p$ realizations. We will divide the domain $\Lambda$ into subsquares with equal size which are our control volumes. The parameters $L$ or $L_{max}$ will denote the number of squares in each spatial direction, i.e., the total number of subsquares used for the spatial discretization is $L^2$ or $L_{max}^2$ and the 2-d Lebesgue measure of those subsquares is $4/L^2$ or $4/L_{max}^2$, respectively. Therefore, we have $h = \sqrt{8}/L$ or $h = \sqrt{8}/L_{max{}}$, respectively. In all upcoming experiments, the splitting method introduced in Section \ref{splitting method} is used in order to compute approximate solutions of the Allen-Cahn equation \eqref{equation}. This choice of parameters does not refer to theoretical results but only to the experiments that substantiate the results.
\subsection{Pathwise properties}
\subsubsection{General behaviour of the splitting method}
At first, let us explain the genereal behaviour of the solution of the splitting method based on an example. Set $a=10$ and $L=2$. Then $\Tau = \{K_0, K_1, K_2, K_3\}$, where $K_0= (-1, 0)^2$, $K_1 = (-1, 0) \times (0,1)$, $K_2 = (0, 1) \times (-1, 0)$ and $K_3 = (0, 1)^2$. Moreover, for a fixed path of the Brownian motion let us say we approximately have
\begin{align*}
    \Delta_1 W &= W_{\frac{1}{4}} - W_0 = -0.6046086559049673, \\
    \Delta_2 W &= W_{\frac{1}{2}} - W_{\frac{1}{4}} = \phantom{-}0.6937104821525855, \\
    \Delta_3 W &= W_{\frac{3}{4}} - W_{\frac{1}{2}} = -1.1713571186231886, \\
    \Delta_4 W &= W_{1} - W_{\frac{3}{4}} = \phantom{-}0.24606633895637547.
\end{align*}
For the approximate initial value we have
\begin{align*}
    u^0 = (u^0_0, u^0_1, u^0_2, u^0_3) = (0.20088542, 0.05244792, 0.72953125, 0.19046875).
\end{align*}
The solutions of the splitting method for $N=2$, $n \in \{1,2\}$ iterations and $\eps = (1/10) \cdot (T/N)^3$ are displayed in Table \ref{table N=2}. We compare these values to the values of the FVS of the SHE, i.e., in the case $\pe \equiv 0$, which are displayed in Table \ref{table N=2 SHE}.
\begin{table}[!h]
\centering
\caption{Solutions of the splitting method for $N=2$}
\begin{tabular}{c|c}
n & $\tun$ \\ \hline
1 & $(\phantom{-}0.39495382, \phantom{-}0.24383317, \phantom{-}0.64814013, \phantom{-}0.38692093)$ \\
2 & $(-0.23254276, -0.21628772, -0.21627557, -0.23198247)$ \\
\end{tabular}
\label{table N=2}
\end{table}
\newline
\begin{table}[!h]
\centering
\caption{Solutions of SHE for $N=2$}
\begin{tabular}{c|c}
n & $\hat{u}^n$ \\ \hline
1 & $(\phantom{-}0.39495382, \phantom{-}0.24383317, \phantom{-}0.64814013, \phantom{-}0.38692093)$ \\
2 & $(-1.69747036, -1.57881501, -1.57872628, -1.69338044)$ \\
\end{tabular}
\label{table N=2 SHE}
\end{table}
\newline
Since the values of $\tilde{u}^1$ on each control volume are inside $[0,1]$, the term $\pe$ did not play any role in the calculation of $\tilde{u}^1$. Hence, $\tilde{u}^1$ coincides with $\hat{u}^1$, the solution of the FVS of the SHE after $n=1$ time steps. Observing the values of $\tilde{u}^2$ we already see that the splitting method is not structure-preserving, i.e., the values of the solution of the splitting method are not pathwise in $[0,1]$. Actually, this is not a surprise since we work with the Yosida approximation $\pe$ and not the sub-differential $\partial I_{[0,1]}$ itself. Moreover, since the values of $\tilde{u}^2$ are not inside $[0,1]$, we obtain $\tilde{u}^2 \neq \hat{u}^2$ and the values of $\tilde{u}^2$ are much closer to the threshold $0$ than the values of $\hat{u}^2$.\\
For $N=4$ we obtain Table \ref{table N=4} for the splitting method.
\begin{table}[!h]
\centering
\caption{Solutions of the splitting method for $N=4$}
\begin{tabular}{c|c}
n & $\tun$ \\ \hline
1 & $(-0.13385192, -0.07727270, -0.10617873, -0.13010620)$ \\
2 & $(-0.02478902, -0.01854758, -0.02242615, -0.02428642)$ \\
3 & $(-0.00476855, -0.00406696, -0.00458739, -0.00470111)$ \\
4 & $(-0.00093698, -0.00085652, -0.00092635, -0.00092793)$ \\
\end{tabular}
\label{table N=4}
\end{table}
\newline
Here we see that the values of $\tilde{u}^1$ on each control volume are less than $0$. Thanks to Corollary \ref{Corollary 151025_01} the values of the components of $\tilde{u}^2, \tilde{u}^3$ and $\tilde{u}^4$ should be less or equal $0$, which is actually the case. Now, the task of $\pe$ is to push the values below $0$ (above $1$, respectively) in the direction of $0$ (of $1$, respectively).
\subsubsection{Constant solutions}
In this section we tackle the question in which cases solutions of the FVS and the splitting method are constant in space or even constant in space-time. It is well known that if the initial value is constant in space then the solution of the Allen-Cahn equation is constant in space.
\begin{thm}\label{Theorem 161025_01}
    Let $u^0 \in \Rtd$ be pathwise constant, i.e., for each fixed $\omega \in \Omega$ there exists $c \in [0,1]$ such that $u^0_K(\omega) = u^0_L(\omega) = c$ for each $K,L \in \Tau$. Moreover, for $N \in \N$ let $u^n = (u^n_K)_{K \in \Tau}$ be the solution of the FVS after $n$ time steps and $(\tun_K)_{K \in \Tau}$ the solution of the splitting method after $n$ time steps. Then we obtain pathwise
\begin{align*}
    \tun_K = u^n_K = u^n_L = \tun_L ~\text{ for all } n \in \{1,...,N\} \text{ and } K,L \in \Tau.
\end{align*}
\end{thm}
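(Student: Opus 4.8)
The plan is to show that both sequences $(\un)_{n}$ and $(\tun)_{n}$ stay spatially constant and, moreover, satisfy the very same one-dimensional recursion. The algebraic backbone is the identity $(M+\tau A)^{-1}M\mathbf 1 = \mathbf 1$, which follows immediately from $A\mathbf 1 = 0$ exactly as recorded in the proof of Corollary \ref{Corollary 151025_01}: since $A\mathbf 1 = 0$ one has $(M+\tau A)\mathbf 1 = M\mathbf 1$, hence $(M+\tau A)^{-1}M(c\mathbf 1) = c\mathbf 1$ for every scalar $c$. Combined with the fact that $g$, $\pe$ and therefore $(I+\tau\pe)^{-1}$ all act componentwise, so that they send $c\mathbf 1$ to $g(c)\mathbf 1$, $\pe(c)\mathbf 1$ and $(I+\tau\pe)^{-1}(c)\,\mathbf 1$ respectively, this shows that each building block of the scheme preserves the line $\mathbb R\mathbf 1$ of spatially constant vectors. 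I would work pathwise throughout, fixing $\omega$ so that $u^0 = c\,\mathbf 1$ for the corresponding $c\in[0,1]$ and treating the increments $\Delta_n W$ as given scalars.

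For the splitting method I would argue by induction, the base case being $\tilde u^0 = u^0 = c\mathbf 1$. Assuming $\tunm = \tilde c_{n-1}\mathbf 1$, the noise step gives $\tunm + g(\tunm)\Delta_n W = (\tilde c_{n-1} + g(\tilde c_{n-1})\Delta_n W)\mathbf 1$; applying $(M+\tau A)^{-1}M$ leaves this constant by the backbone identity, and $(I+\tau\pe)^{-1}$ acts componentwise, so $\tun = \tilde c_n\mathbf 1$ with $\tilde c_n = (I+\tau\pe)^{-1}\big(\tilde c_{n-1}+g(\tilde c_{n-1})\Delta_n W\big)$. Thus every $\tun$ is spatially constant and the scalars obey this scalar recursion with $\tilde c_0 = c$.

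For the FVS the update map $(M+\tau A + \tau M\pe)^{-1}$ is nonlinear, so I cannot simply push a constant through it; instead I would use the constant vector as an ansatz and close the argument with uniqueness. Inserting $\un = c_n\mathbf 1$ into the defining equation \eqref{eq_011025_01} with $\unm = c_{n-1}\mathbf 1$, and using $A\mathbf 1 = 0$ together with $\pe(c_n\mathbf 1)=\pe(c_n)\mathbf 1$, reduces the left-hand side to $(c_n + \tau\pe(c_n))M\mathbf 1$ and the right-hand side to $(c_{n-1}+g(c_{n-1})\Delta_n W)M\mathbf 1$. Since all entries of $M\mathbf 1 = (m_K)_{K\in\Tau}$ are strictly positive, this vector equation is equivalent to the scalar equation $c_n + \tau\pe(c_n) = c_{n-1}+g(c_{n-1})\Delta_n W$, whose unique solution is $c_n = (I+\tau\pe)^{-1}\big(c_{n-1}+g(c_{n-1})\Delta_n W\big)$. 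Hence $c_n\mathbf 1$ genuinely solves \eqref{eq_011025_01}, and by the uniqueness part of Proposition \ref{251205_p1} it must be the solution, i.e.\ $\un = c_n\mathbf 1$ with $c_0 = c$.

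Finally, the two scalar recursions for $(c_n)$ and $(\tilde c_n)$ are literally identical and start from the same value $c_0 = \tilde c_0 = c$, so $c_n = \tilde c_n$ for all $n$; therefore $\un = \tun = c_n\mathbf 1$, which yields the claimed pathwise equalities $\tun_K = \un_K = \un_L = \tun_L$ for all $n\in\{1,\dots,N\}$ and $K,L\in\Tau$. The only genuinely delicate point I anticipate is the FVS step: because its update map is nonlinear, one must verify the constant ansatz directly against the equation and then invoke uniqueness, rather than proceed by a straightforward computation; everything else rests on the componentwise action of $g,\pe$ and the kernel property $A\mathbf 1=0$ already exploited in Corollary \ref{Corollary 151025_01}.
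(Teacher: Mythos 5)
Your proof is correct and follows essentially the same route as the paper's: both arguments rest on the kernel property $A\mathbf{1}=0$, the componentwise action of $g$, $\pe$ and $(I+\tau\pe)^{-1}$ on constant vectors, and, for the FVS step, verifying that the spatially constant candidate solves the nonlinear equation and invoking uniqueness. Your formulation via a common scalar recursion for $c_n$ and $\tilde c_n$ is a slightly more explicit way of packaging the same induction, but it is not a genuinely different argument.
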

\begin{proof}
    By induction, we only need to proof the assertions for $n=1$. The vector $u^1$ is given by the formula
    \begin{align*}
        u^1 &= (M + \tau A + \tau M \pe)^{-1} M (u^0 + g(u^0) \Delta_1 W) \\
        &= (I + \tau M^{-1} A + \tau \pe)^{-1} (u^0 + g(u^0) \Delta_1 W).
    \end{align*}
    We set $\overline{u}^1 := (I + \tau \pe)^{-1} (u^0 + g(u^0) \Delta_1 W)$. Let us remark that
    \begin{align*}
       &(u^0 + g(u^0) \Delta_1 W)_K = u^0_K + g(u^0_K) \Delta_1 W \\
       &u^0_L + g(u^0_L) \Delta_1 W = (u^0 + g(u^0) \Delta_1 W)_L
    \end{align*}
    for all $K,L \in \Tau$. Then we have
    \begin{align}\label{eq_161025_01}
        \begin{aligned}
            &\overline{u}^1 := (I + \tau \pe)^{-1} (u^0 + g(u^0) \Delta_1 W) \\
            \Leftrightarrow ~ &(I + \tau \pe)(\overline{u}^1) = u^0 + g(u^0) \Delta_1 W \\
            \Leftrightarrow ~ & \left((I + \tau \pe)(\overline{u}^1)\right)_K = \left(u^0 + g(u^0) \Delta_1 W \right)_K ~\text{ for all } K \in \Tau \\
            \Leftrightarrow ~ &\overline{u}^1_K + \tau \pe(\overline{u}^1_K) = u^0_K + g(u^0_K) \Delta_1 W ~\text{ for all } K \in \Tau \\
            \Leftrightarrow ~ &\overline{u}^1_K= (I + \tau \pe)^{-1}\left(u^0_K + g(u^0_K) \Delta_1 W \right) ~\text{ for all } K \in \Tau.
        \end{aligned}
    \end{align}
    Hence, $\overline{u}^1_K = \overline{u}^1_L$ for all $K,L \in \Tau$. Moreover, since $A \mathbf{1} = 0$ we get $A (\overline{u}^1)=0$. Therefore
    \begin{align*}
        (I + \tau \pe)(\overline{u}^1) = (I + \tau M^{-1} A + \tau \pe)(\overline{u}^1)
    \end{align*}
    and
    \begin{align*}
        \overline{u}^1 = (I + \tau M^{-1} A + \tau \pe)^{-1}(u^0 + g(u^0) \Delta_1 W) = u^1.
    \end{align*}
    For the solution of the splitting method we have
    \begin{align*}
        \tilde{u}^1 = (I + \tau \pe)^{-1}(M + \tau A)^{-1}M (u^0 + g(u^0) \Delta_1 W) .
    \end{align*}
    Since all components of $u^0 + g(u^0) \Delta_1 W$ are equal, the equality $A \mathbf{1} = 0$ yields that
    \begin{equation*}
        (M + \tau A)^{-1}M (u^0 + g(u^0) \Delta_1 W) =  u^0 + g(u^0) \Delta_1 W.
    \end{equation*}
    Now, similar arguments as in \eqref{eq_161025_01} yield $\tilde{u}^1_K = \tilde{u}^1_L$ for all $K,L \in \Tau$.
\end{proof}
If $c \in [0,1]$ such that $g(c)=0$, it is well known that $u$ defined by $u(t,x)=c$ for all $t \in [0,T]$ and $x \in \Lambda$ is a solution of the Allen-Cahn equation. The same applies to the solution of the FVS and the splitting method.
\begin{thm}\label{Theorem 161025_02}
    Let $u^0 \in \Rtd$ be pathwise constant, i.e., for each fixed $\omega \in \Omega$ there exists $c \in [0,1]$ such that $u^0_K(\omega) = u^0_L(\omega) = c$ for each $K,L \in \Tau$ and let $g(c)=0$. Moreover, for $N \in \N$ let $u^n = (u^n_K)_{K \in \Tau}$ be the solution of the FVS after $n$ time steps and $(\tun_K)_{K \in \Tau}$ the solution of the splitting method after $n$ time steps. Then we obtain pathwise
    \begin{equation*}
        u^n_K = \tun_K = c ~\text{ for all } n \in \{1,...,N\} \text{ and } K,L \in \Tau.
    \end{equation*}
\end{thm}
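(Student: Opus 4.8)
The plan is to argue by induction on $n$, exactly in the spirit of Theorem \ref{Theorem 161025_01}, so that it suffices to treat the passage from a pathwise constant state $\unm = \tunm = c\mathbf{1}$ (with $c \in [0,1]$) to the next iterate. Three structural facts drive the argument: first, the support hypothesis $\operatorname{supp}(g) \subset [0,1]$ together with $g(c)=0$ forces $g(\unm) = g(\tunm) = 0 \in \Rtd$, so that the stochastic increment drops out of both recursions; second, the identity $A\mathbf{1}=0$ places every constant vector in the kernel of $A$; and third, the explicit resolvent formula from Section \ref{splitting method} gives $\pe(c)=0$ and $(I + \tau\pe)^{-1}(c) = c$ whenever $c \in [0,1]$, so that constants valued in $[0,1]$ are common fixed points of the relevant deterministic operators. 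Note that, by Theorem \ref{Theorem 161025_01}, we already know both iterates are constant in space; the only additional content here is to identify the constant value as $c$.

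For the splitting method I would start from $\tunm = c\mathbf{1}$ and use $g(c)=0$ to collapse the recursion to
\begin{align*}
    \tun = (I + \tau\pe)^{-1}(M + \tau A)^{-1}M(c\mathbf{1}).
\end{align*}
Since $A\mathbf{1}=0$ we have $(M + \tau A)(c\mathbf{1}) = cM\mathbf{1}$, whence $(M + \tau A)^{-1}M(c\mathbf{1}) = c\mathbf{1}$ (this is the same observation used in Corollary \ref{Corollary 151025_01}). Applying $(I + \tau\pe)^{-1}$ and invoking $c \in [0,1]$ then yields $\tun = c\mathbf{1}$, completing the inductive step for the splitting method.

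For the FVS I would instead verify directly that $c\mathbf{1}$ solves the implicit equation. With $g(\unm)=0$ the scheme, in the form used in the proof of Theorem \ref{Theorem 161025_01}, reads $(I + \tau M^{-1}A + \tau\pe)(\un) = c\mathbf{1}$. Substituting the candidate $\un = c\mathbf{1}$, the term $\tau M^{-1}A(c\mathbf{1})$ vanishes by $A\mathbf{1}=0$, while $\tau\pe(c\mathbf{1}) = 0$ by $c \in [0,1]$, so $c\mathbf{1}$ indeed satisfies the equation. Uniqueness of the solution, which follows from the strict monotonicity of $I + \tau(M^{-1}A + \pe)$ and is already recorded in Proposition \ref{251205_p1}, then forces $\un = c\mathbf{1}$ and closes the induction.

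I do not expect a genuine obstacle in this statement: the entire argument rests on the single observation that $g(c)=0$ removes the noise and that $c\mathbf{1}$ is a common fixed point of the two deterministic resolvents, which in turn reduces to the vanishing of $\pe$ on $[0,1]$ and of $A$ on constant vectors. The only point requiring a little care is to keep the two recursions straight — the splitting method composes $(I+\tau\pe)^{-1}$ with $(M+\tau A)^{-1}M$, whereas the FVS inverts the combined operator $I + \tau M^{-1}A + \tau\pe$ — but both are handled by the same three facts.
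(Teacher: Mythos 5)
Your proposal is correct and follows essentially the same route as the paper: the paper's proof likewise reduces by induction to the step $n=1$, reuses the arguments of Theorem \ref{Theorem 161025_01} (with $g(c)=0$ killing the noise and $A\mathbf{1}=0$ collapsing both recursions to the componentwise resolvent) to get $u^1_K=\tilde{u}^1_K=(I+\tau\pe)^{-1}(c)$, and concludes $=c$ since $c\in[0,1]$. Your direct verification of the candidate $c\mathbf{1}$ for the FVS plus uniqueness is only a cosmetic variant of the paper's construction of $\overline{u}^1$.
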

\begin{proof}
    By induction we only need to prove the assertion for $n=1$. Repeating the arguments in the proof of Theorem \ref{Theorem 161025_01} we have
    \begin{equation*}
        u^1_K = \tilde{u}^1_K= (I + \tau \pe)^{-1}(u^0_K) = (I + \tau \pe)^{-1}(c) ~\text{ for all } K \in \Tau.
    \end{equation*}
    By definition of $(I + \tau \pe)^{-1}$, we obtain $u^1_K = \tilde{u}^1_K=c$ for all $K \in \Tau$.
\end{proof}
After a benchmark test of Theorem \eqref{Theorem 161025_01} and \eqref{Theorem 161025_02} our scheme is accurate.
\subsection{Expected value of the solution}
Since $\int_{\Lambda} \Delta u(t) \, dx = \int_{\partial \Lambda} \nabla u(t) \cdot \mathbf{n} \, dS = 0$ for almost every $t \in (0,T)$, after integrating \eqref{eq 161225_01} over $\Lambda$ and applying the expectation we obtain:
\begin{align*}
    \int_{\Lambda} \erww{u(t)} \, dx = \int_{\Lambda} \erww{u_0} \, dx - \int_{\Lambda} \int_0^t \erww{\psi(s)} \, ds \, dx
\end{align*}
for $\psi\in \partial I_{[0,1]}(u)$ almost everywhere in $\Omega\times (0,T)\times\Lambda$. Hence, the more the subdifferential must keep the solution above the threshold $0$ or below the threshold $1$, the greater the difference between $\int_{\Lambda} \erww{u(t)} \, dx$ and $\int_{\Lambda} \erww{u_0} \, dx$. We may observe the same phenomenom in our numerical experiments: Let us denote by $\tilde{u}_n^N = (\tilde{u}_{N, K}^n)_{K \in \Tau}$ the solution of the splitting method for $N \in \N$ after $n$ time steps. Moreover, let us denote by $E_K(\tilde{u}_N^n) := \frac{1}{N_p} \sum\limits_{k=1}^{N_p} \tilde{u}_{N, K}^n(\omega_k)$ the estimated approximation of the expectation of $\tilde{u}_{N, K}^n$ via Monte-Carlo method for $N_p$ paths $\omega_1,..., \omega_{N_p} \in \Omega$ and $K \in \Tau$. Then, $E(\tilde{u}_N^n):= \frac{1}{|\Tau|} \sum\limits_{K \in \Tau} E_K(\tilde{u}_N^n)$ denotes an approximation of $\frac{1}{\Lambda} \int_{\Lambda} \erww{u(t_n)} \, dx$ via splitting method. Now we choose $\Tau = \{K_0,..., K_{L^2-1}\}$, where $K_n = (-1 + \frac{2k}{L}, -1 + \frac{2k+2}{L}) \times (-1 + \frac{2m}{L}, -1 + \frac{2m+2}{L})$ and $m,k \in \{0,..., L-1\}$ such that $n = Lk + m$. We choose $L=5$, $N=2048$, $N_p = 3000$ and $a \in \{1, 3, 10, 40\}$. Then we have $E(u^0) = 0.29333333$ and

\begin{table}[!htbp]
\centering
\caption{Mean of $\tilde{u}_n^N$ for $a \in \{1,3, 10, 40\}$, $n=2$ and $N=2048$}
\begin{tabular}{c|c|c}
a & $\tun_N$ & $|E(u^0) - E(\tilde{u}_N^n)|$\\ \hline
1 & $0.29466193$ & $0.0013286$\\
3 & $0.32526468$ & $0.03193135$\\
10 & $0.37757956$ & $0.08424623$\\
40 & $0.45968785$& $0.16635452$\\
\end{tabular}
\label{table n=2, N=2048}
\end{table}

\begin{table}[!htbp]
\centering
\caption{Mean of $\tilde{u}_n^N$ for $a \in \{1,3, 10, 40\}$, $n=64$ and $N=2048$}
\begin{tabular}{c|c|c}
a & $\tun_N$ & $|E(u^0) - E(\tilde{u}_N^n)|$\\ \hline
1 & $0.29380818$ & $0.00047485$\\
3 & $0.30069061$ & $0.00735728$\\
10 & $0.30783147$ & $0.01449814$\\
40 & $0.37980240$ & $0.08646907$\\
\end{tabular}
\label{table n=64, N=2048}
\end{table}

\begin{table}[!h]
\centering
\caption{Mean of $\tilde{u}_n^N$ for $a \in \{1,3, 10, 40\}$, $n=2048$ and $N=2048$}
\begin{tabular}{c|c|c}
a & $\tun_N$ & $|E(u^0) - E(\tilde{u}_N^n)|$\\ \hline
1 & $0.29387679$ & $0.00054346$\\
3 & $0.30050788$ & $0.00717455$\\
10 & $0.27745881$ & $0.01587452$\\
40 & $0.30688916$ & $0.01355583$\\
\end{tabular}
\label{table n=2048, N=2048}
\end{table}

\subsection{Numerical error estimates}\label{experiments}
In this section we provide estimates of the error between $\tilde{u}_{L_{max},N_{max}}^{N_{max}}$ and $\tilde{u}_{L,N}^N$. More precisely, we calculate the quantity
\begin{align*}
    E(L,L_{max},N,N_{max}, N_p) := \frac{1}{N_p} \sum\limits_{i=1}^{N_p}\Vert \tilde{u}_{L_{max},N_{max}}^{N_{max}}(\omega_i) - \tilde{u}_{L,N}^N(\omega_i) \Vert_{L^2(\Lambda)}^2
\end{align*}
for $\omega_i \in \Omega$, $i=1,...,N_p$, where $\tilde{u}_{L,N}^N:= \tilde{u}_{h,N}^N$ for $h= \frac{\sqrt{8}}{L}$ and $\tilde{u}_{L_{max},N_{max}}^{N_{max}}:= \tilde{u}_{h_{max},N_{max}}^{N_{max}}$ for $h_{max} = \frac{\sqrt{8}}{L_{max}}$, respectively, are identified with its respective simple function on $\Lambda$.
At first, we will focus on the case $L=L_{max}=4$. Let us mention that the computational error does not change significantly for larger $L$ here. In the following graphs we use $N_p = 9000$, $N_{max} = 40320$ and $N \in N_1 =\{210,280,360,504,630,840,1008,1260,1680,2520,3360,4032,5040\}$.\\
\noindent
The graphs are displayed in a log-log-scale. The blue dots determine the computational error and the orange line is a linear regression line. We will denote by $m$ the slope of the orange graph, i.e., $E(L,L,N_{max},N) \approx C\tau^m$ for some $C>0$. From \cite{SZ25} we know that $m=1$ for the stochastic heat equation, therefore we expect for $a$ being small a computational order of convergence  $m$ close to $1$, but for large $a$ we might obtain a computational order of convergence $m$ less than $1$. In the following, we choose $\varepsilon = \frac{1}{10}\tau^{0.4}$. The order of computational order can be seen in table \ref{a = 1,5,30,60} and figures \ref{figure 2} and \ref{figure 3}.

\begin{table}[!htbp]
\centering
\caption{Order of convergence for $a \in \{1,5,30,60\}$}
\begin{tabular}{c|c}
a & $m$ \\ \hline
1 & $1.05987278$ \\
5 & $1.01879060$ \\
30 & $0.22905177$ \\
60 & $0.14004819$ \\
\end{tabular}
\label{a = 1,5,30,60}
\end{table}
\begin{figure}[!htbp]
    \centering
    \includegraphics[width=0.55\linewidth]{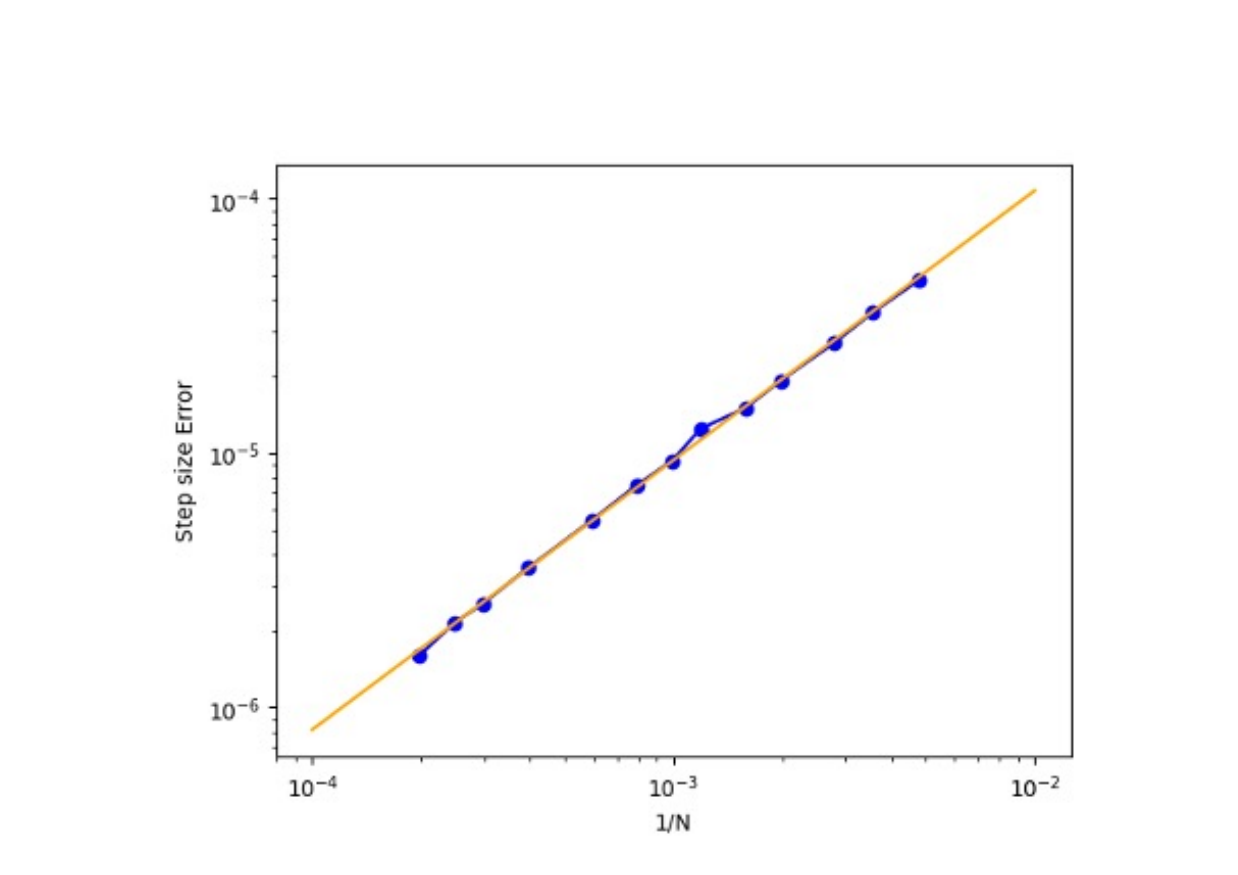}
    \hspace{-50pt}\includegraphics[width=0.55\linewidth]{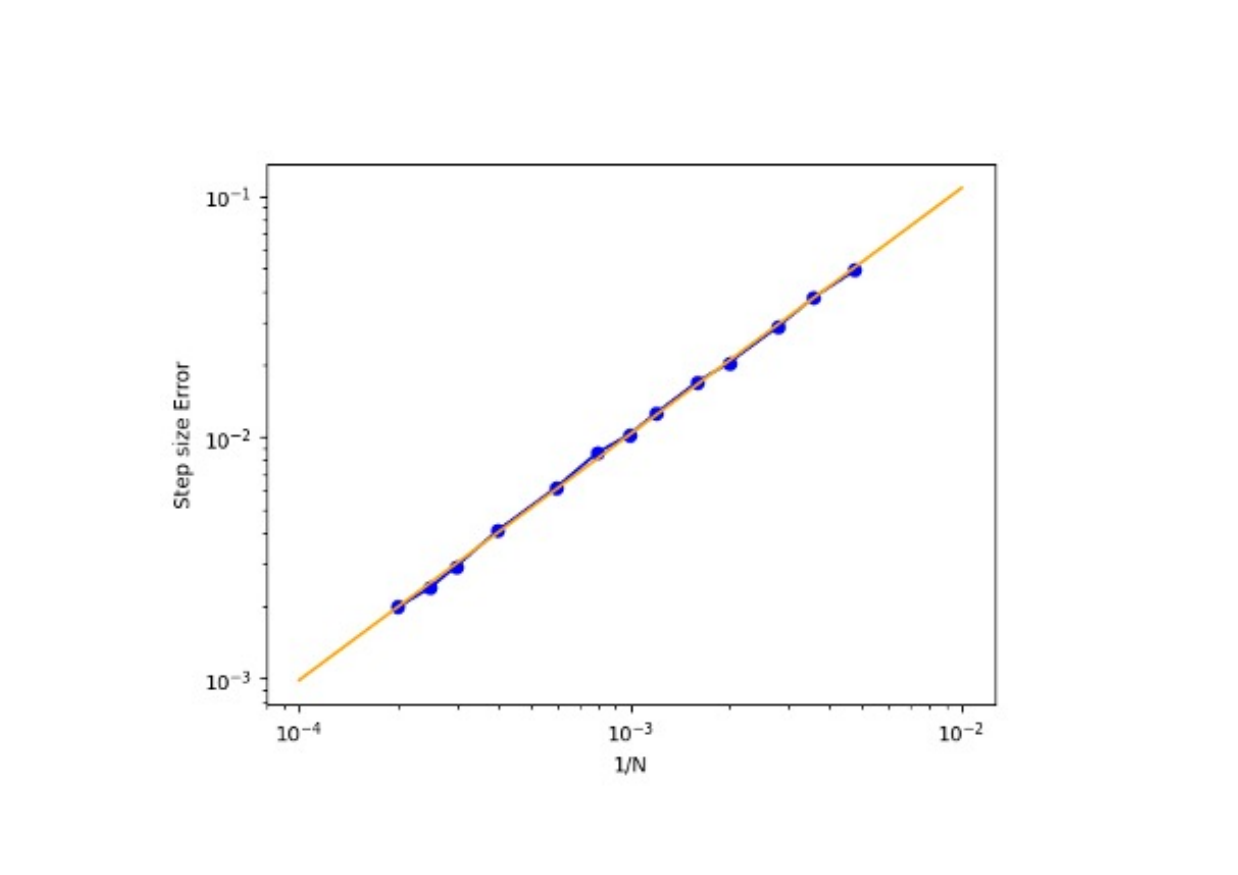}
    \caption{First graph: squared $L^2$-error with $a=1$. Second graph: squared $L^2$-error with $a=5$.}
    \label{figure 2}
\end{figure}
\begin{figure}[!htbp]
    \centering
    \includegraphics[width=0.55\linewidth]{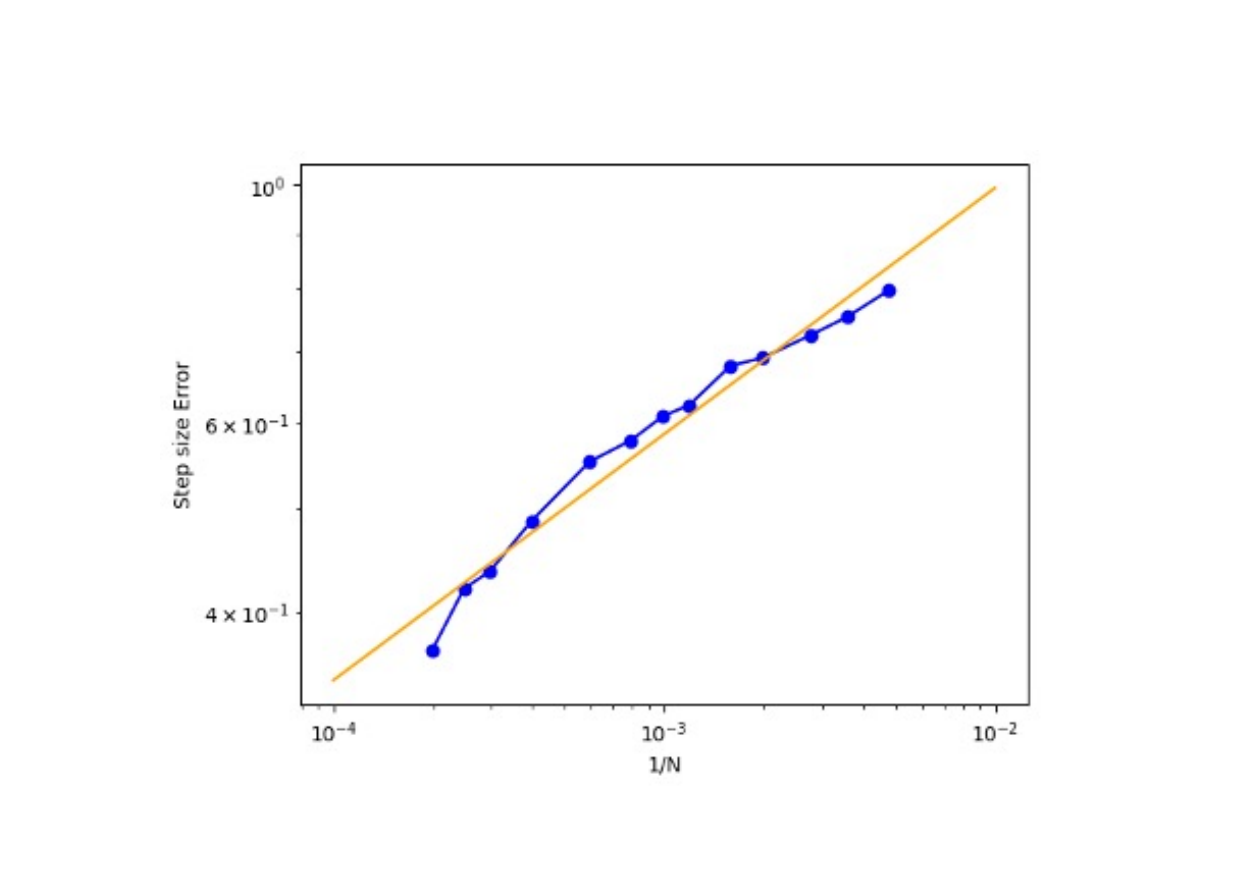}
    \hspace{-50pt}\includegraphics[width=0.55\linewidth]{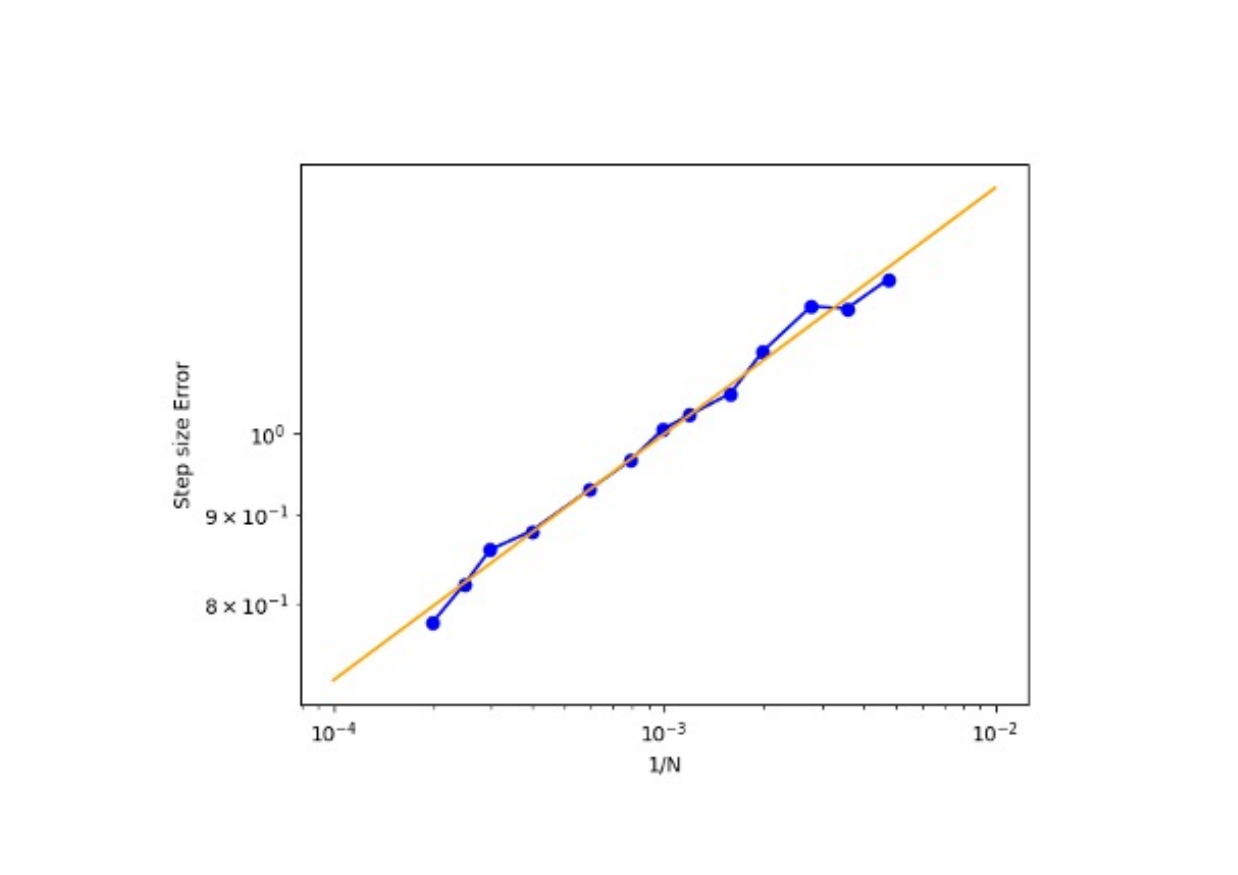}
    \caption{First graph: squared $L^2$-error with $a=30$. Second graph: squared $L^2$-error with $a=60$.}
    \label{figure 3}
\end{figure}
\begin{figure}[!htbp]
    \centering
    \includegraphics[width=1\linewidth]{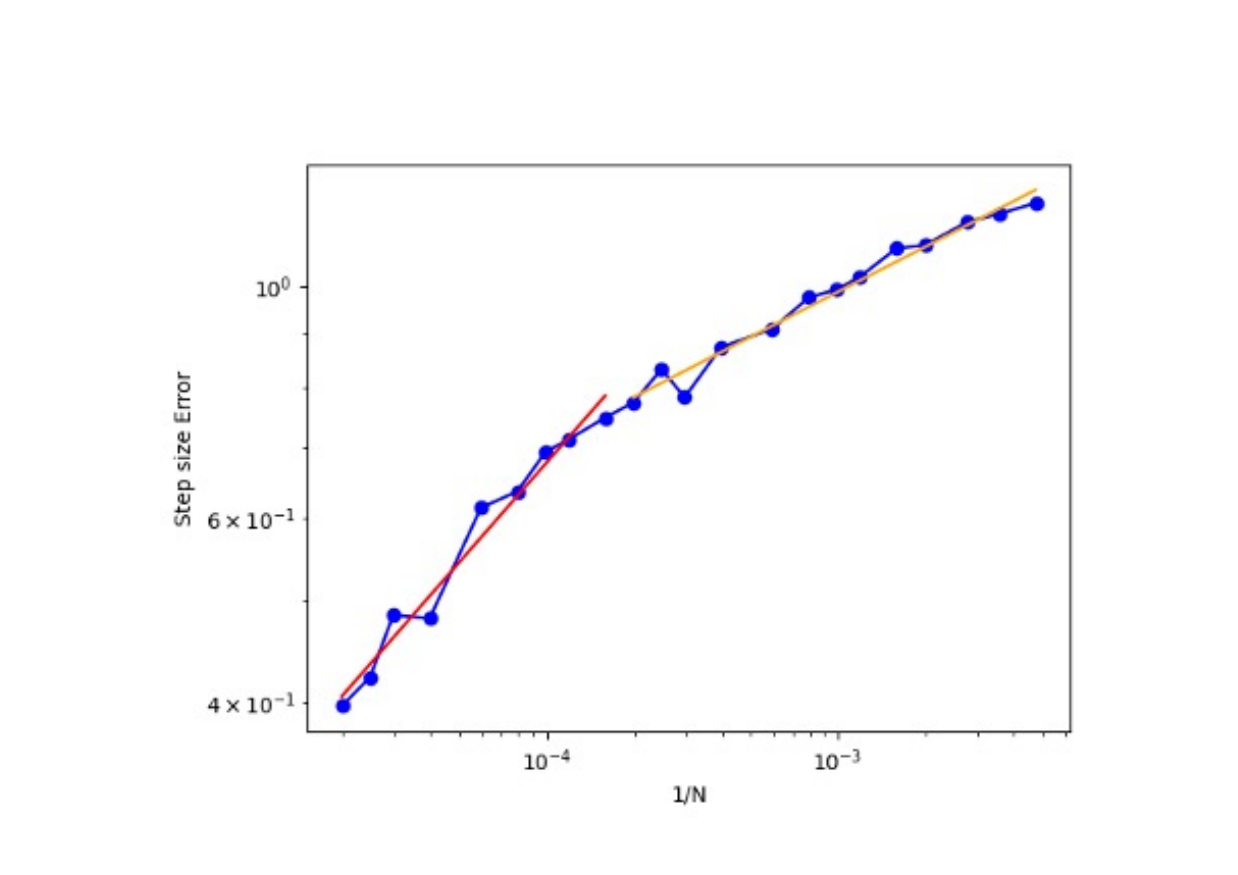}
    \caption{squared $L^2$-error with $a=60$ for $N \in N_1 \cup N_2$.}
    \label{figure 4}
\end{figure}
\noindent
The very low computational order of convergence for $a=30,60$ might be a result of the fact that the values for $N$ and $N_{max}$ are too low. Therefore, we repeat the experiment for $a=60$, $N_p = 3000$, $N_{max} = 403200$, $N \in N_1 \cup N_2$, where $N_2 = \{6300, 8400, 10080, 12600, 16800, 25200, 33600, 40320, 50400\}$. The orange line determines the linear regression line for $N \in N_1$ and the red line the linear regression for $N \in N_2$. For $N \in N_1$ we obtain $m = 0.14360763$, as expected close to the previous result, and for $N \in N_2$ we get $m = 0.31759428$ (see figure \ref{figure 4}). Hence, for large $a$, as $N$ increases the computational order of convergence increases.

\end{document}